\renewcommand{\boxed}[1]{\text{\fboxsep=.2em\fbox{\m@th$\displaystyle#1$}}}
\newcommand{\Z}{\mathbb{Z}}
\newcommand{\N}{\mathbb{N}}
\newcommand{\G}{\mathcal{G}}
\renewcommand{\le}{\leqslant}
\renewcommand{\ge}{\geqslant}
\theoremstyle{plain}% default
\newtheorem{thm}{Theorem}[section]
\newtheorem{lem}[thm]{Lemma}
\newtheorem{prop}[thm]{Proposition}
\newtheorem{theorem}{Theorem}
\newtheorem{corollary}{Corollary}
\newtheorem*{rep@theorem}{\rep@title}
\newcommand{\newreptheorem}[2]{%
\newenvironment{rep#1}[1]{%
\def\rep@title{#2 \ref{##1}}%
\begin{rep@theorem}}%
{\end{rep@theorem}}}
\newtheorem*{thm*}{Theorem}
\newtheorem*{lem*}{Lemma}
\newtheorem*{prop*}{Proposition}
\newtheorem*{cor*}{Corollary}
\newtheorem*{qu*}{Question}
\newtheorem*{dt*}{Definition and Theorem}
\newtheorem*{exmp*}{Example}
\newtheorem*{exmps*}{Examples}
\newtheorem*{dprop*}{Definition and Proposition}
\newtheorem*{conj*}{Conjecture}
\theoremstyle{definition}
\newtheorem{defn}[thm]{Definition}
\newtheorem*{defn*}{Definition}
\newtheorem*{not*}{Notation}
\theoremstyle{plain}
\newtheorem*{rem*}{Remark}
\DeclareMathOperator\Sym{Sym}
\begin{document}
\title{Invariable generation and wreath products}

\author{Charles Garnet Cox}
\address{School of Mathematics, University of Bristol, Bristol BS8 1UG, UK}
\email{charles.cox@bristol.ac.uk}

\thanks{}

\subjclass[2010]{20E22, 20F05}

\keywords{invariable generation of infinite groups, wreath products}
\date{\today}
\begin{abstract}
Invariable generation is a topic that has predominantly been studied for finite groups. In 2014, Kantor, Lubotzky, and Shalev produced extensive tools for investigating invariable generation for infinite groups. Since their paper, various authors have investigated the property for particular infinite groups or families of infinite groups.

A group is invariably generated by a subset $S$ if replacing each element of $S$ with any of its conjugates still results in a generating set for $G$. In this paper we investigate how this property behaves with respect to wreath products. Our main work is to deal with the case where the base of $G\wr_X H$ is not invariably generated. We see both positive and negative results here depending on $H$ and its action on $X$.
\end{abstract}
\maketitle
\section{Introduction}
Invariable generation arises naturally in computational Galois theory, and has been actively studied in relation to many interesting questions. A group $G$ is invariably generated if there exists a set $S$ such that for any choice of elements $a_g \in G$, $g \in G$ we have $\langle a_s^{-1}sa_s\;:\;s \in S\rangle=G$. All finite groups are invariably generated, leading to a question of the size of the smallest set that invariably generates a given finite group. The usual citation is \cite{Dixon} from 1992, but \cite{intro2} in 1872 also considered this natural idea (as noted in \cite{topgen}). There are many exciting and unexpected results in this area, such as \cite[Thm. 1.3]{introfinite} which says that any non-abelian finite simple group can be invariably generated by two elements. The same authors also worked with invariable generation for infinite groups, developing a wide range of results in \cite{intro}. In the infinite case, there exist groups that are not invariably generated, for example any infinite group with just two conjugacy classes; uncountably many 2-generated torsion free examples of such groups were produced in \cite{osin2conjclass}. In \cite{Wiegold1, Wiegold2} the notion of groups where no proper subgroup meets every conjugacy class was considered, which is equivalent to the group being invariably generated; \cite{Wiegold1} showed that this property is closed under extensions, whereas \cite{Wiegold2} showed that it is not always preserved for subgroups. For infinite groups we can also make the distinction between groups that are invariably generated only by infinite sets, and finitely invariably generated: those for which a finite invariable generating set exists.

\begin{not*} We will write IG to denote that a group is invariably generated, FIG if it is finitely invariably generated, and $\neg$IG if the group is not invariably generated (or equivalently that the group itself is not an invariable generating set). Note that usage of IG throughout this paper will mean that the group is not FIG.
\end{not*}

\begin{defn*} Let $G$ and $H$ be non-trivial groups and let $X$ be a set on which $H$ acts faithfully. Then $G\wr_X H$ is the group $\left(\bigoplus_{x \in X} G_x\right)\rtimes H$, where elements of $H$ act, via conjugation, by multiplying the indices of $\left(\bigoplus_{x \in X} G_x\right)$ on the right. This copy of $H$ is called the \emph{head} of $G\wr_X H$. The \emph{base} of $G\wr_X H$ is the subgroup $\bigoplus_{x \in X} G_x$. We will write $G\wr H$, called a regular wreath product, for the specific case that $X=H$ (with the natural action of $H$ on itself by multiplication).
\end{defn*}
\begin{not*} Let $H_X\le \Sym(X)$ denote the group $H$ together with a faithful action of $H$ on $X$.
Also, for any $g \in G$ and $x \in X$, let $g^{(x)}$ denote the element $g$ in $G_x$.
\end{not*}
In this paper we systematically investigate how this construction behaves with respect to invariable generation. Current known results for infinite groups created from wreath products are \cite[Prop 2.11]{intro} which deals with iterated wreath products of finitely generated abelian groups and \cite{iteratedwreath} where a limit of an iterated wreath product of finite cyclic groups is considered. Other results in \cite{intro} do not add extensively to this. If both $G$ and $H$ are FIG or IG, then it follows from \cite[Cor. 2.3(iii)]{intro}, which states that the class of invariably generated groups is closed under extensions, that $G\wr_X H$ is IG or FIG. A result for the Frattini subgroup of an arbitrary wreath product appears difficult to produce, which prevents the use of \cite[Lem. 2.5]{intro}. Moreover if $X$ is infinite and $G$ is non-trivial, then the base of $G\wr_XH$ will not be finitely generated and so cannot be FIG. This makes the other tools in \cite{intro} mostly unusable for studying wreath products. Our approach is more combinatorial in nature and involves either showing that a general generating set cannot invariably generate our group, or showing that any conjugates of a particular generating set will generate the group. 
\begin{not*} Given a group $G$ and elements $a, b \in G$, let $a\sim b$ denote that $a$ is conjugate to $b$ in $G$ i.e.\ that there exists an $x\in G$ such that $x^{-1}ax=b$.
\end{not*}

If $H$ is $\neg$IG, then a simple argument by contradiction shows that $G\wr_X H$ is $\neg$IG (after noting that given $k, k' \in H$ with $k\sim k'$, then for any $w \in\bigoplus_{x \in X} G_h$ there exists a $w' \in \bigoplus_{x \in X} G_h$ such that $wk\sim w'k'$). Similarly if $H$ is IG then $G\wr_X H$ cannot be FIG. In Proposition \ref{firstargument}, we show that if $G$ and $H$ are FIG and $G\wr_XH$ is finitely generated, then $G\wr_X H$ is FIG. These  results appear in Section 2, and are summarised in the following table.
\begin{table}[h!]
\begin{tabular}{lcccc}
                    & \multicolumn{1}{l}{}          & \multicolumn{3}{c}{H}      \\
                    & \multicolumn{1}{c|}{}         & FIG    & IG     & $\neg$IG \\ \cline{2-5}
\multicolumn{1}{c}{} & \multicolumn{1}{c|}{FIG}      & FIG    & IG     & $\neg$IG \\
G                    & \multicolumn{1}{c|}{IG}       & - & IG     & $\neg$IG \\
                    & \multicolumn{1}{c|}{$\neg$IG} & - & - & $\neg$IG
\end{tabular}
%\caption*{some tet}
\end{table}

In order to simplify the description of our results, we create a new definition. The idea is to generalise, to $G\wr_XH$, the situation where the head of $G\wr H$ is torsion.
\begin{defn*} We say that $H_X$ is of torsion-type if there exists a $y \in X$ such that for all $x \in yH_X$ and all $k \in H_X$, $x\langle k\rangle$ is finite.
\end{defn*}
The remaining cases depend entirely on whether $G\wr_X H$ is finitely generated (since this is an obstruction to $G\wr_X H$ being FIG) and whether $H_X$ is of torsion-type. We prove Theorem \ref{mainthmA} in Section 3 and Theorem \ref{mainthmB} in Section 4.

\begin{theorem} \label{mainthmA} Let $H_X$ be of torsion-type.
\begin{enumerate}[i)]
\item If $H$ is FIG and $G$ is IG, then $G\wr_X H$ is IG.
\item If $H$ is FIG or IG and $G$ is $\neg$IG, then $G\wr_X H$ is $\neg$IG.
\end{enumerate}
\end{theorem}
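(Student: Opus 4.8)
I would obtain the two negative-flavoured assertions --- part~(ii), and the ``$W:=G\wr_X H$ is not FIG'' half of part~(i) --- by the same device: for a prescribed finite subset, exhibit a proper subgroup of $W$ containing a conjugate of each of its elements. Here I use the equivalences recalled in the introduction, that a group is $\neg$IG exactly when some proper subgroup meets every conjugacy class, and that a group fails to be FIG exactly when for every finite subset there is a proper subgroup meeting the conjugacy class of each of its elements. The combinatorial input is a coordinatewise analysis of conjugacy: writing an element of $W$ as $wk$ with $k\in H$ and $w=(g_x)_{x\in X}$ in the base $B=\bigoplus_{x\in X}G_x$, conjugation by $H$ permutes the coordinates of $w$, while conjugation by $v=(v_x)_x\in B$ fixes $k$ and sends $g_x\mapsto v_x^{-1}g_x v_{xk}$ (a routine computation with the conventions above). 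I would fix a $y$ witnessing torsion-type and set $Y:=yH_X$; since $k$ permutes the $H_X$-orbits it preserves $Y$, and every $\langle k\rangle$-orbit inside $Y$ is finite. On such a finite orbit $O=\{x_0,x_0k,\dots,x_0k^{m-1}\}$ the product $g_{x_0}g_{x_0k}\cdots g_{x_0k^{m-1}}$ transforms to $v_{x_0}^{-1}(g_{x_0}g_{x_0k}\cdots g_{x_0k^{m-1}})v_{x_0}$, so $v_{x_0}$ is a free conjugating parameter; hence for any subgroup $M\le G$ meeting the conjugacy class of that product, a $v$ supported on $O$ can be chosen so that after conjugation every coordinate of $O$ lies in $M$ (trivialise all coordinates of $O$ but one, then conjugate the surviving product into $M$). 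Since $w$ has finite support, only finitely many $\langle k\rangle$-orbits in $Y$ meet $\mathrm{supp}(w)$, so a single finitely supported $v\in B$ achieves this over all of $Y$ at once, fixing coordinates outside $Y$.

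For a proper subgroup $M\le G$ put $\mathcal M_M:=\{wk\in W: g_x\in M\text{ for all }x\in Y\}$; because $k$ preserves $Y$ and $M$ is a subgroup this is a subgroup of $W$, and it is proper since $g^{(y)}\notin\mathcal M_M$ when $g\in G\setminus M$. For part~(ii) I would take, using that $G$ is $\neg$IG, a proper $M\le G$ meeting every conjugacy class of $G$; the computation then conjugates an arbitrary $wk$ into $\mathcal M_M$, so $\mathcal M_M$ is a proper subgroup of $W$ meeting every conjugacy class and $W$ is $\neg$IG. (The hypothesis on $H$ enters only through the case $H$ is $\neg$IG, already handled in the introduction.) For ``$W$ is not FIG'' in part~(i): given a finite $S\subseteq W$, write each $s=w_sk_s$, let $R\subseteq G$ be the finite set of products of $w_s$ around the finite $\langle k_s\rangle$-orbits in $Y$ meeting $\mathrm{supp}(w_s)$ (each taken in the cyclic order of the orbit), and use that $G$ is not FIG to pick a proper $M\le G$ meeting the conjugacy class of every element of $R$. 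The computation conjugates each $s$ into $\mathcal M_M$, so this choice of conjugates of the elements of $S$ generates a subgroup of $\mathcal M_M\subsetneq W$; hence $S$ is not an invariable generating set, and as $S$ was arbitrary, $W$ is not FIG.

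For the positive half of part~(i) I would display an invariable generating set of $W$. Fix a finite invariable generating set $T$ of $H$ (available as $H$ is FIG) and a transversal $\{y_i\}_{i\in I}$ of the $H_X$-orbits on $X$, and set $\mathcal S:=T\cup\{g^{(y_i)}:g\in G,\ i\in I\}$. Given arbitrary conjugates of the members of $\mathcal S$, let $U$ be the subgroup they generate. Projecting $W\twoheadrightarrow H$ kills the conjugates of the $g^{(y_i)}$ and carries the conjugates of $T$ to one conjugate of each element of $T$, which generate $H$ since $T$ invariably generates $H$; hence $U$ surjects onto $H$ and $UB=W$. Next, decomposing a conjugating element as base times head and using that $B$ is a direct sum, a conjugate of $g^{(y_i)}$ has the form $\tilde{g}^{(z)}$ with $\tilde{g}\sim g$ in $G$ and $z\in y_iH_X$; conjugating $\tilde{g}^{(z)}$ by an element of $U$ mapping to a chosen $\mu\in H$ (one exists as $U$ surjects onto $H$) moves it to coordinate $z\mu$ and replaces $\tilde{g}$ by a further $G$-conjugate of $g$. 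Choosing $\mu$ suitably, one gets that $U\cap G_x$ contains a conjugate of every element of $G$ for every $x\in X$; as $G$ is an invariable generating set of itself (being IG), this forces $G_x\le U$, whence $B=\langle G_x:x\in X\rangle\le U$ and $U=W$. So $\mathcal S$ invariably generates $W$, and combined with the previous paragraph $W$ is IG. I expect the main obstacle to be the orbit-wise conjugacy bookkeeping of the first paragraph: the torsion-type hypothesis is precisely what confines the relevant coordinate products to \emph{finite} $\langle k\rangle$-cycles, so that ``meeting a conjugacy class'' --- all that the hypotheses on $G$ provide --- is enough to absorb $wk$ into a proper parabolic-type subgroup; a secondary point is verifying that conjugates of $T$ together with conjugates of a single copy of $G$ per orbit always rebuild the entire base.
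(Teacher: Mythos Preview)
Your proposal is correct and follows essentially the same approach as the paper: the key device in both is the torsion-type hypothesis allowing one to collapse the $Y$-support of an arbitrary $wk$ along each finite $\langle k\rangle$-orbit to a single coordinate carrying the orbit product, with a free $G$-conjugating parameter (the paper isolates this as two lemmas, you do it in one computation), and then to use $\neg$IG or $\neg$FIG of $G$ to trap the conjugated set inside a proper subgroup. Your subgroup $\mathcal{M}_M=(\bigoplus_{x\in Y}M_x\oplus\bigoplus_{x\notin Y}G_x)\rtimes H$ is exactly the overgroup the paper arrives at implicitly, and your positive argument for part~(i) is the infinite-generating-set version of the paper's Proposition~\ref{firstargument}, which the paper invokes (together with extension-closedness) rather than reproving.
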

This theorem was partly investigated because of \cite[Problems 1 and 2]{intro} which ask whether taking a finite index subgroup preserves the properties of IG and FIG, respectively. Theorem \ref{mainthmA} states that wreath products $G\wr_XH$ with $X$ and $H$ finite cannot provide examples where $G$ is IG and $G\wr_XH$ is FIG or where $G$ is $\neg$IG and $G\wr_XH$ is FIG or IG.

The other case is perhaps more surprising. Given $H_X$ that is not of torsion-type, essentially it says that the only impact that $G$  can have on whether $G\wr_XH$ is FIG, IG, or $\neg$IG, is for $G$ and $X$ to be chosen so that $G\wr_XH$ is not finitely generated and so not FIG. The proof involves fixing  elements $t\in H$ of infinite order and then showing that powers of any conjugate of $g^{(x)}t$ allow us to retrieve, in some sense, the element $g$. This occurs because conjugacy in $G\wr_X H$ under these hypotheses behaves more like multiplication in the group.

\begin{theorem} \label{mainthmB} Let $H_X$ be not of torsion-type.
\begin{enumerate}[i)]
\item If $G\wr_XH$ is finitely generated and $H$ is FIG, then $G\wr_X H$ is FIG.
\item If $G\wr_XH$ is not finitely generated and $H$ is FIG, then $G\wr_X H$ is IG.
\item If $H$ is IG, then $G\wr_X H$ is IG.
\end{enumerate}
\end{theorem}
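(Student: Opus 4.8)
The three parts share a common core, so the plan is to prove them together by exhibiting a single kind of invariable generating set and observing that it can be taken finite exactly in case (i). Since $H_X$ is not of torsion-type, for every $H_X$-orbit $O$ we may fix $x_O\in O$ and $t_O\in H$ with $x_O\langle t_O\rangle$ infinite; in particular $t_O$ has infinite order. Fix also an invariable generating set $S_H$ of $H$ (finite in (i)–(ii), arbitrary in (iii)) and put
$$S=S_H\cup\{g^{(x_O)}t_O\ :\ g\in G,\ O\in X/H_X\},$$
except in case (i), where $G\wr_XH$ being finitely generated forces $G$ to be finitely generated, say by $g_1,\dots,g_r$, and $X/H_X$ to be finite, and where we replace the second part of $S$ by the finite set $\{g_i^{(x_O)}t_O:1\le i\le r,\ O\in X/H_X\}$. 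Let $L\le G\wr_XH$ be generated by an arbitrary choice of conjugates of the elements of $S$; we must show $L=G\wr_XH$. Writing $B=\bigoplus_{x\in X}G_x$ for the base and $\pi\colon G\wr_XH\to H$ for the quotient by $B$, it suffices to show (a) $\pi(L)=H$, and (b) $G_{x_O}\le L$ for every orbit $O$: given both, conjugating $G_{x_O}$ by elements of $L$ mapping onto $H$ under $\pi$ yields $G_y\le L$ for every $y\in O$, hence $B\le L$, hence $L=G\wr_XH$.

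For (a): the chosen conjugates of the elements of $S_H$ are sent by $\pi$ to conjugates in $H$ of those elements, and these generate $H$ because $S_H$ invariably generates $H$; so $\pi(L)\supseteq H$.

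For (b), fix an orbit and write $x=x_O$, $t=t_O$. Let $c=u\tau$ (with $u\in B$ and $\tau=\pi(c)=h^{-1}th$ a conjugate of $t$) be the chosen conjugate of $g^{(x)}t$; explicitly $u={}^{h^{-1}}\!\big(v^{-1}g^{(x)}\,{}^tv\big)$ for the base part $v$ of the conjugating element. The computational heart is the identity $c^n=\big(\prod_{i=0}^{n-1}{}^{\tau^i}u\big)\tau^n$ together with the fact that, evaluated along the $\langle\tau\rangle$-orbit of $x^h$ — infinite, since $x\langle t\rangle$ is — the base part $\prod_{i=0}^{n-1}{}^{\tau^i}u$ telescopes: using that $\operatorname{supp}(v)$ is finite, one checks that for $n$ large its value at the point with index $\ell$ in this orbit is $g$ \emph{exactly} at every interior $\ell$, and is $1$ or a fixed, boundedly-supported residue elsewhere and off the orbit. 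This is the sense in which powers of a conjugate of $g^{(x)}t$ retrieve $g$ itself. Since $\pi(L)=H$, $L$ contains $m$ with $\pi(m)=\tau$, so $c^{n}m^{-n}\in L\cap B$; playing these elements off against each other for varying $n$, and re-centring using elements of $L$ lying above powers of $\tau$, one extracts $g^{(z_g)}\in L$ for some $z_g\in O$. Transporting the elements obtained for the various $g$ (resp.\ $g_i$) to a common point of $O$ by elements of $L$ and assembling them yields $G_z\le L$ for a single $z\in O$; with the spreading of the first paragraph, $L=G\wr_XH$, so $S$ is an invariable generating set.

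This shows $G\wr_XH$ is invariably generated in all three cases; in case (i) the set $S$ is finite, so $G\wr_XH$ is FIG. For (ii)–(iii) it remains to note that $G\wr_XH$ is not FIG — because an FIG group is finitely generated, which fails in case (ii), and because, as recorded in the introduction, $H$ being IG forces $G\wr_XH$ not to be FIG in case (iii) — so $G\wr_XH$ is IG in the sense of this paper. The main obstacle is the final assembling step when $G$ is not assumed invariably generated (in particular when $G$ is non-abelian): then $L\cap B$ need not be $H$-invariant, so one cannot kill the residue using a module structure over $\Z[H]$, and one must produce in $L$ a genuine, unconjugated copy of $g$ for every $g\in G$ at one and the same point, so that their span is all of $G_z$ — it is precisely here that the exactness in the telescoping (powers recovering $g$, not merely a conjugate of $g$) is essential.
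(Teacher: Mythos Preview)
Your overall architecture matches the paper's: reduce to showing that a fixed set $S$ invariably generates, establish $\pi(L)=H$ from $S_H'$, and then use the infinite $\langle t\rangle$-orbit to telescope powers of conjugates of $g^{(x)}t$. The gap is in your choice of $S$ and in the step ``one extracts $g^{(z_g)}\in L$''.

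The paper's set is $S_H\cup\big(\bigcup_i \G_i\cup\G_it_i\cup\{t_i\}\big)$; you have dropped the pure base generators $\G_i$ (and, in case~(i), also $t_O$ unless $e$ happens to be among your $g_i$). This matters because the telescoping does \emph{not} produce a clean single-coordinate element $g^{(z)}$. What the detailed computation actually yields (the paper's $\beta(g,m,n)=\alpha_e^{n}(\alpha_e^{-m}\alpha_g^{m})\alpha_e^{-n}$) is a base element whose component at one fixed point $z=yt^{c}$ equals $g$, while the remaining components---coming from the finite supports of the conjugators for $t$ and for $g^{(y)}t$, and from an uncontrolled factor off the $\langle t\rangle$-orbit---are nonzero and depend on $g$. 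In the paper's language this is only a $\Gamma_z$-set, not the element $g^{(z)}$ itself. A $\Gamma_z$-set together with $S_H'$ does not in general generate $G_z$: products of such elements have the correct $z$-component but carry incompatible garbage elsewhere, and there is no $H$-module structure to kill it when $G$ is nonabelian.

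The paper closes this gap precisely by keeping $\G_i$ in $S$: a conjugate of $g^{(y_i)}$ is supported at a \emph{single} coordinate, of the form $(b^{-1}gb)^{(z)}$ after moving inside the orbit via $S_H'$, and then any $\Gamma_z$-set element $\gamma_b$ with $z$-component $b$ conjugates this to $g^{(z)}$; this is Lemma~\ref{thestrategy}. Your ``transporting to a common point'' cannot substitute for this: conjugating $g^{(z_g)}$ by an element $wk\in L$ with $z_gk=z$ produces $(w_{z_g}^{-1}gw_{z_g})^{(z)}$, i.e.\ a conjugate of $g$, which is exactly the loss you flag in your last paragraph but do not repair. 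The fix is simply to put the $\G_i$ back into $S$ and use the $\Gamma_z$-set as a conjugator rather than as a source of clean elements. With that change your sketch aligns with the paper's Proposition~\ref{nottorsionprop} and Lemmas~\ref{lemelementsalphag}--\ref{buildinggammas}.
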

Theorem \ref{mainthmB} provides a natural embeddability result. The author is unaware of a finitely generated IG group being explicitly stated in the literature.
\begin{qu*} Does there exist a finitely generated IG group (that is not FIG)?
\end{qu*}
\noindent{\bf Edit:} A positive answer to this question has now been given independently by Minasyan and Goffer-Lazarovich in \cite{answered1} and \cite{answered2} respectively. From their results, the assumption that such a group exists can be removed from the following corollary. Corollary A then yields a continuum of examples of finitely generated IG groups.
\begin{corollary} Let $G$ be a finitely generated group and $H$ a group that is not finitely generated (so that it may be of arbitrarily large cardinality). Assume there exists a group $A$ that is IG and finitely generated. We then have that:
\begin{itemize}
\item $G\wr\Z$ is FIG;
\item $G\wr (A\times \Z)$ is IG and finitely generated, and moreover $G\wr A$ is IG and finitely generated if $A$ is not torsion;
\item $H\wr\Z$ is IG.
\end{itemize}
Hence every finitely generated group embeds into a FIG group and every group embeds into an IG group. Should a group exist that is IG and finitely generated, then every finitely generated group embeds into a finitely generated IG group.
\end{corollary}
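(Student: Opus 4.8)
The plan is to read the corollary off Theorem~\ref{mainthmB}, using two short preliminary facts to translate the hypotheses. The first is the well-known criterion that a regular wreath product $K\wr L$ with $L$ finitely generated is finitely generated precisely when $K$ is: if $K=\langle k_1,\dots,k_m\rangle$ and $L=\langle l_1,\dots,l_r\rangle$, then placing the $k_i$ in the coordinate of the identity of $L$ and adjoining $l_1,\dots,l_r$ gives a generating set, because conjugation by the head slides the identity coordinate across all of $X=L$; conversely, if $K\wr L=\langle S\rangle$ with $S$ finite, then writing each $s\in S$ as $w_s\cdot h_s$ with $w_s$ in the base, one checks (e.g.\ by Reidemeister--Schreier along the head) that the base is generated by the $L$-translates of the finitely many $w_s$, so projecting onto a single coordinate exhibits $K$ as generated by the finitely many values of the $w_s$. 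In particular $G\wr\Z$, $G\wr A$, $G\wr(A\times\Z)$ are finitely generated when $G$ is (using that $\Z$, $A$, $A\times\Z$ are), while $H\wr\Z$ is \emph{not} finitely generated when $H$ is not. I also record that $\Z$ is FIG.

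The second preliminary is that if $A$ is IG and finitely generated, then $A\times\Z$ is IG in the sense of the paper, i.e.\ invariably generated but not FIG. It is invariably generated, being an extension of $A$ (IG) by $\Z$ (FIG, hence invariably generated) and invariable generation being closed under extensions \cite[Cor.~2.3(iii)]{intro}. It is not FIG because FIG passes to quotients: given a finite invariable generating set $\{s_i\}$ of a group and a surjection $q$, lifting arbitrarily chosen conjugates through $q$ shows $\{q(s_i)\}$ invariably generates the quotient; and $A\times\Z$ surjects onto $A$, which is not FIG.

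Now each bullet is a direct application of Theorem~\ref{mainthmB}. In every case the relevant $H_X$ of that theorem is the \emph{regular} action of a group on itself, and such an action is not of torsion-type as soon as the group contains an element $k$ of infinite order, since then $x\langle k\rangle$ is an infinite coset for every $x$; thus $\Z_\Z$, $(A\times\Z)_{A\times\Z}$ and, when $A$ is not torsion, $A_A$ are not of torsion-type. For finitely generated $G$: $G\wr\Z$ is finitely generated with FIG head $\Z$, so Theorem~\ref{mainthmB}(i) gives FIG; and $G\wr(A\times\Z)$ is finitely generated with IG head $A\times\Z$, so Theorem~\ref{mainthmB}(iii) gives IG, and likewise $G\wr A$ when $A$ is not torsion. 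For non-finitely-generated $H$: in $H\wr\Z$ the base group is $H$ and the head is $\Z$, the whole group is not finitely generated, and the head is FIG, so Theorem~\ref{mainthmB}(ii) gives IG.

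Finally, the embedding statements follow since a group embeds into the base of any of its wreath products as a single coordinate $G_x$: a finitely generated $G$ embeds into $G\wr\Z$, which is FIG; an arbitrary group $K$ embeds into $\widehat K:=K\times\bigoplus_{i\in\N}\Z$, which is not finitely generated (it surjects onto $\bigoplus_{i\in\N}\Z$) and hence embeds into the IG group $\widehat K\wr\Z$; and if a finitely generated IG group $A$ exists then a finitely generated $G$ embeds into $G\wr(A\times\Z)$, which is finitely generated and IG. I do not expect any real obstacle beyond Theorem~\ref{mainthmB} itself: the one point where a small argument is genuinely needed is the second preliminary, namely that $A\times\Z$ is not FIG (that it is invariably generated being immediate from closure under extensions); the finite-generation criterion and the torsion-type checks are routine.
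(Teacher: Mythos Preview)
Your proof is correct and follows the paper's approach, which is literally the one line ``All of the statements follow immediately from Theorem~\ref{mainthmB}.'' You simply spell out what the paper leaves implicit: the finite-generation criterion for regular wreath products, the check that the regular actions of $\Z$, $A\times\Z$, and non-torsion $A$ are not of torsion-type, and the observation that $A\times\Z$ is IG (invariably generated by extension-closure, not FIG because FIG passes to quotients and $A$ is not FIG). Your small auxiliary trick of padding an arbitrary group by $\bigoplus_{i\in\N}\Z$ to make it non-finitely-generated, so that the ``every group embeds into an IG group'' clause applies uniformly, is a harmless addition not explicitly in the paper.
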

\begin{proof} All of the statements follow immediately from Theorem \ref{mainthmB}.
\end{proof}
Note the stark contrast between this corollary and \cite[Thm. 1.1]{osin2conjclass} which states that any countable group can be embedded into one where all elements of the same order are conjugate. For torsion-free groups, this is in some sense the furthest that we can be from being invariably generated. We now turn our attention to iterated wreath products.
The following relies on the above table and Theorems A and B.
\begin{corollary} Let $G^{(n)}:=(\ldots((G_1\wr_{X_2} G_2)\ldots)\wr_{X_{n-1}} G_{n-1})\wr_{X_{n}} G_n$. If $G_2, \ldots, G_n$ with respect to $X_2, \ldots, X_n$ are all of torsion-type, then set $k:=1$. Otherwise, let $k$ be the largest number in $\{1, \ldots, n\}$ such that $(G_k)_{X_k}$ is not of torsion-type. Then
\begin{enumerate}[i)]
\item $G^{(n)}$ is FIG if, and only if,  $G^{(n)}$ is finitely generated and $G_k, \ldots, G_n$ are FIG.
\item $G^{(n)}$ is IG if, and only if, $G_k, \ldots, G_n$ are IG or FIG and (i) does not occur.
\end{enumerate}
In the case of an iterated regular wreath product, (i) becomes that $G_1, \ldots, G_{k-1}$ are finitely generated and $G_k, \ldots, G_n$ are FIG.
\end{corollary}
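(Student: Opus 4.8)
The plan is to induct on $n$, peeling off the head $G_n$ at each step and applying Theorems~\ref{mainthmA} and~\ref{mainthmB} together with the Section~2 results summarised in the table. The first task is to record, once and for all, the ``one-step dichotomy'': for a single wreath product $G\wr_X H$, exactly which of FIG / IG / $\neg$IG holds in terms of the status of $G$ and $H$ and of whether $G\wr_X H$ is finitely generated. When $H_X$ is not of torsion-type this is precisely the content of Theorem~\ref{mainthmB} (for $H$ being FIG or IG) together with the observation made after the notation for $\sim$ that $H$ being $\neg$IG forces $G\wr_X H$ to be $\neg$IG; here $G$ is irrelevant except through finite generation. When $H_X$ is of torsion-type, Theorem~\ref{mainthmA}, the same $\neg$IG observation, the fact that $H$ being IG prevents $G\wr_X H$ from being FIG, Proposition~\ref{firstargument}, and the closure of the class of invariably generated groups under extensions (\cite[Cor.~2.3(iii)]{intro}) combine to give: $G\wr_X H$ is $\neg$IG iff $G$ or $H$ is $\neg$IG; it is FIG iff $G$ and $H$ are both FIG and $G\wr_X H$ is finitely generated; and otherwise it is IG. Alongside this I will record the elementary fact that if $G\wr_X H$ is finitely generated then both $G$ and $H$ are (a finite generating set involves only finitely many coordinates and only finitely many components, so $G\wr_X H=G_0\wr_X H$ for the finitely generated subgroup $G_0\le G$ they generate, forcing $G_0=G$), together with the converse for regular wreath products of finitely generated groups.

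For the induction, write $G^{(n)}=G^{(n-1)}\wr_{X_n}G_n$, so that $G^{(n-1)}$ plays the role of the base group and $G_n$ of the head. The case $n=1$ is immediate. If $(G_n)_{X_n}$ is not of torsion-type then $k=n$, statement (i) is the FIG part of Theorem~\ref{mainthmB}, and statement (ii) follows by rewriting its IG part: once we know $G_n$ is FIG or IG, the condition ``$G_n$ is IG or $G^{(n)}$ is not finitely generated'' is equivalent to ``$G^{(n)}$ is not finitely generated or $G_n$ is not FIG'', i.e.\ to the negation of (i). If $(G_n)_{X_n}$ is of torsion-type then adding a torsion-type head does not change the largest non-torsion-type index, so the $k$ attached to $G^{(n)}$ equals the one attached to $G^{(n-1)}$ and the inductive hypothesis applies to $G^{(n-1)}$; I then feed it into the torsion-type branch of the one-step dichotomy. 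For (i): $G^{(n)}$ is FIG iff $G^{(n-1)}$ is FIG, $G_n$ is FIG, and $G^{(n)}$ is finitely generated; by induction $G^{(n-1)}$ is FIG iff it is finitely generated and $G_k,\dots,G_{n-1}$ are FIG, while finite generation of $G^{(n)}$ forces finite generation of $G^{(n-1)}$ by Step~1, so the two descriptions coincide. For (ii) one argues in the same spirit, using the inductive (i) and (ii) for $G^{(n-1)}$ (which together show $G^{(n-1)}$ is FIG or IG whenever $G_k,\dots,G_{n-1}$ all are), the one-step dichotomy, and the already-proved (i) for $G^{(n)}$ to pin down exactly when the ``otherwise it is IG'' branch is entered.

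The regular wreath product refinement is obtained at the end by unwinding the hypothesis that $G^{(n)}$ is finitely generated. For a regular wreath product $A\wr B$ one has $A\wr B$ finitely generated iff $A$ and $B$ are, so a short secondary induction gives that $G^{(n)}$ is finitely generated iff $G_1,\dots,G_n$ all are; since FIG already implies finite generation, the conditions ``$G^{(n)}$ finitely generated'' and ``$G_k,\dots,G_n$ FIG'' together reduce to ``$G_1,\dots,G_{k-1}$ finitely generated and $G_k,\dots,G_n$ FIG''.

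I expect the real friction to be bookkeeping rather than new mathematics: keeping straight that ``IG'' here means ``invariably generated but not FIG'', and threading the finite-generation hypothesis correctly through the induction, since finite generation is the only channel through which the groups $G_1,\dots,G_{k-1}$ lying below the last non-torsion-type head can still influence the conclusion. The single non-bookkeeping ingredient is the small lemma that finite generation of $G\wr_X H$ descends to $G$ (with its converse for regular wreath products), which is needed both to make the statement internally consistent and to carry out the inductive step.
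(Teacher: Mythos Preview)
Your proposal is correct and uses essentially the same ingredients as the paper's proof: Theorems~\ref{mainthmA} and~\ref{mainthmB}, Proposition~\ref{firstargument}, and Lemmas~\ref{notFIG}, \ref{notIG}, \ref{finitelygen}. The only difference is organisational: you run a formal induction on $n$, peeling off the outermost head at each step, whereas the paper argues directly by starting at level $k$ and propagating the FIG/IG/$\neg$IG status of $G^{(k)}$ upward through the torsion-type heads $G_{k+1},\dots,G_n$.
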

\begin{proof} If $G_2, \ldots, G_n$ are all of torsion-type (with respect to $X_2, \ldots, X_n$), then the result follows from either repeatedly applying Theorem \ref{mainthmA}(i) or repeatedly applying Proposition \ref{firstargument}.

For any $m \in \{1, \ldots, n\}$, let $G^{(m)}:=(\ldots((G_1\wr_{X_2} G_2)\ldots)\wr_{X_{m-1}} G_{m-1})\wr_{X_m} G_m$. We have that $G_k$ is not of torsion-type whereas $G_{k+1}, \ldots, G_n$ are of torsion-type. If $G_k$ is $\neg$IG, then $G^{(k)}$ is $\neg$IG by Lemma \ref{notIG}. Then $G^{(k+1)}, \ldots, G^{(n)}$ are all also $\neg$IG by Theorem \ref{mainthmA}(ii). If $G_k$ is IG, then $G^{(k)}$ is IG by Theorem \ref{mainthmB}(iii). Then $G^{(k+1)}$ is IG if and only if $G_{k+1}$ is either IG or FIG by Theorem \ref{mainthmA}(i) and Lemma \ref{notIG}. If $G_k$ is FIG, then $G^{(k)}$ is FIG if and only if it is finitely generated (again by Theorem \ref{mainthmB}). Then $G^{(k+1)}$ is FIG if and only if $G_{k+1}$ is FIG by Lemma \ref{notFIG}, Lemma \ref{notIG}, and Proposition \ref{firstargument}. If $G^{(k+1)}$ is IG or $\neg$IG, then Theorem \ref{mainthmA} states that $G^{(k+2)}, \ldots, G^{(n)}$ are also either IG or $\neg$IG. Hence $G^{(n)}$ can only be FIG if $G^{(k)}, \ldots, G^{(n)}$ are all FIG.
\end{proof}
Our arguments also apply to profinite groups where the notion of topological generation can be used to produce notions analogous to FIG, IG, and $\neg$IG. For topological generation we do have a finitely generated IG group, for example in \cite{qfortopgen} and \cite[Section 4]{intro}.

\vspace{0.3cm}
\noindent\textbf{Acknowledgements.} I thank Gareth Tracey from the University of Bath for introducing me to the topic of invariable generation. I thank Tim Burness at the University of Bristol for his advice and encouragement. Finally, I thank the referee for their clear and helpful comments.

\section{Initial observations}
We first compute the conjugates of the base and the head of $G\wr_X H$. In order to describe the form of these conjugates, the following definition is useful.
\begin{defn*} Given $K\le G$ and $g \in G$, a $K$-conjugate of $g$ is any element in $\{k^{-1}gk\;:\;k \in K\}$.
\end{defn*}
\begin{lem} \label{firstcalcs} Let $w \in \bigoplus_{x \in X}G_x$, $k \in H$, and $y \in X$. If $g \in G_y$, then we have that $(wk)^{-1}g(wk)\in G_{yk}$. If $h \in H$, then any $G\wr H$-conjugate of $h$ decomposes as the product of an $H$-conjugate of $h$ and an element of $\bigoplus_{x \in X}G_x$.
\end{lem}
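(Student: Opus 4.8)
The plan is to verify both assertions by a direct computation inside the semidirect product $G\wr_X H=\left(\bigoplus_{x\in X}G_x\right)\rtimes H$, relying only on the structural fact that $H$ normalises the base and permutes its direct factors according to $H_X$: for every $k\in H$ and $x\in X$ one has $k^{-1}g^{(x)}k=g^{(xk)}$, and hence $k^{-1}vk\in\bigoplus_{x\in X}G_x$ whenever $v\in\bigoplus_{x\in X}G_x$. I would state this identity first, since it is the only input used in either part.

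For the first claim, write $w=\prod_{x\in X}g_x^{(x)}$, a product with only finitely many non-trivial terms. As distinct factors $G_x$ commute, conjugating $g=g_0^{(y)}\in G_y$ by $w$ changes only the $y$-th coordinate, so $w^{-1}gw=(g_y^{-1}g_0g_y)^{(y)}\in G_y$. Applying the displayed identity to this element and to $k$ gives $k^{-1}\big((g_y^{-1}g_0g_y)^{(y)}\big)k=(g_y^{-1}g_0g_y)^{(yk)}\in G_{yk}$, whence $(wk)^{-1}g(wk)\in G_{yk}$, as required.

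For the second claim, write an arbitrary element of $G\wr_X H$ uniquely in the form $wk$ with $w\in\bigoplus_{x\in X}G_x$ and $k\in H$. Then $(wk)^{-1}h(wk)=k^{-1}(w^{-1}hw)k$. Moving $h$ to the left past $w$ gives $w^{-1}hw=h\cdot v$ with $v=(h^{-1}w^{-1}h)w$; here $h^{-1}w^{-1}h$ lies in the base by the index-shifting identity, so $v$ does too. Therefore $(wk)^{-1}h(wk)=(k^{-1}hk)(k^{-1}vk)$, which exhibits the conjugate as the product of the $H$-conjugate $k^{-1}hk$ of $h$ and the base element $k^{-1}vk$.

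Neither part presents a genuine obstacle; both are bookkeeping with the index-shifting action. The only point that needs care is the order of the factors in the normal form: the lemma places the head part to the \emph{left} of a base element, so one works with the decomposition $wk$ (base then head) and pushes $h$ leftwards through $w$; with the opposite normal form $kw$ the base factor would emerge on the left and the statement would need to be rephrased accordingly.
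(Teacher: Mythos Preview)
Your proof is correct and follows essentially the same approach as the paper's: both parts are handled by direct computation in the semidirect product, using the identity $k^{-1}g^{(x)}k=g^{(xk)}$ and the fact that distinct factors $G_x$ commute. The only cosmetic difference is that for the second assertion the paper conjugates $h$ by $(wk)^{-1}$ rather than by $wk$, obtaining the decomposition in the order (base element)$\cdot$($H$-conjugate) instead of your ($H$-conjugate)$\cdot$(base element); either form suffices for the lemma as stated.
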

\begin{proof}
Fix a $y \in X$ and let $g \in G_y$. Let $X_y:=X\setminus\{y\}$. Given any $k \in H$ and $w \in \bigoplus_{x \in X}G_x$, we have that $w=\bigoplus_{x \in X}w^{(x)}$. Then
\begin{equation*}
\begin{split}
(wk)^{-1}g(wk)&=k^{-1}(w^{-1}gw)k\\
&=k^{-1}\Bigg(\bigoplus_{x \in X_y}(w^{(x)})^{-1}((w^{(y)})^{-1}gw^{(y)})\bigoplus_{x \in X_y}w^{(x)}\Bigg)k\\
&=k^{-1}((w^{(y)})^{-1}gw^{(y)})k
\end{split}
\end{equation*}
which is of the form $k^{-1}f^{(y)}k$ for $f=w^{-1}gw$, a $G$-conjugate of $g$. Notice that $k^{-1}f^{(y)}k=f^{(yk)}\in G_{yk}$. We now conjugate $h \in H$ by $(wk)^{-1}$, and see that
\begin{equation*}
(wk)h(wk)^{-1}=w(khk^{-1})w^{-1}=wh'w^{-1}=wuh'
\end{equation*}
where $h'=khk^{-1}$, a $H$-conjugate of $h$, and $wu \in \bigoplus_{x \in X}G_x$.
\end{proof}

\begin{lem}\label{notFIG} Let $H$ be IG. Then $G\wr_X H$ cannot be FIG.
\end{lem}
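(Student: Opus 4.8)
The plan is to argue by contradiction, exploiting the fact that $H$ occurs as a quotient of $G\wr_X H$. Suppose $G\wr_X H$ were FIG, witnessed by a finite set $S$. Let $\pi\colon G\wr_X H\to H$ be the projection with kernel the base $\bigoplus_{x\in X}G_x$, and set $T:=\pi(S)$, a finite subset of $H$. I would then show that $T$ invariably generates $H$; since $T$ is finite, this contradicts the hypothesis that $H$ is IG (i.e.\ invariably generated but not finitely invariably generated), which gives the result.

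To verify that $T$ invariably generates $H$, fix arbitrary elements $h_t\in H$, one for each $t\in T$. For each $s\in S$ let $g_s:=h_{\pi(s)}$, viewed as an element of the head of $G\wr_X H$, so that $\pi(g_s)=h_{\pi(s)}$. Because $S$ invariably generates $G\wr_X H$, we have $\langle g_s^{-1}sg_s:s\in S\rangle=G\wr_X H$; applying the surjection $\pi$ yields $H=\langle \pi(g_s)^{-1}\pi(s)\pi(g_s):s\in S\rangle=\langle h_{\pi(s)}^{-1}\pi(s)h_{\pi(s)}:s\in S\rangle$. As $s$ ranges over $S$ its image $\pi(s)$ ranges over $T$, conjugated by the prescribed element $h_t$, so every choice of conjugates of the elements of $T$ produces a generating set of $H$, as required.

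The argument is really just the observation that a quotient of a FIG group is FIG, carried out explicitly through $\pi$. The only point needing care is that distinct elements of $S$ sharing a common $\pi$-image must be conjugated by the same $h_t$, which is why the conjugators $g_s$ are indexed through $\pi(s)$ rather than through $s$ directly; beyond this bookkeeping I do not anticipate any genuine obstacle.
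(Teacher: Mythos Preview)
Your argument is correct and is essentially the same as the paper's: both exploit the projection $\pi\colon G\wr_X H\to H$ to reduce to the fact that no finite subset of $H$ invariably generates $H$. The paper phrases this directly (given a finite subset $\{w_ih_i\}$ of $G\wr_X H$, choose conjugators $a_i\in H$ witnessing that $\{h_i\}$ fails to invariably generate $H$), while you phrase it as the contrapositive (FIG passes to quotients), but the content is identical and your bookkeeping with $g_s=h_{\pi(s)}$ handles the only subtlety.
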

\begin{proof} Since $H$ is not FIG, given any finite set $\{h_1, \ldots, h_m\}$ there exist $a_1, \ldots, a_m$ such that $\langle a_1^{-1}h_1a_1, \ldots a_m^{-1}h_ma_m\rangle \ne H$. Hence given $\{w_1h_1, \ldots, w_mh_m\}\subset G\wr H$, we have that $\{a_1^{-1}w_1h_1a_1, \ldots, a_m^{-1}w_mh_ma_m\}$ does not generate $H$.
\end{proof}
From this lemma and the fact that invariable generation is extension closed, if $G$ is FIG or IG and $H$ is IG, then $G\wr_X H$ is IG.
\begin{lem}\label{notIG} Let $H$ be $\neg$IG. Then $G\wr_X H$ is $\neg$IG.
\end{lem}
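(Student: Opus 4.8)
The plan is to push a witness of $H$ being $\neg$IG through the natural projection $\pi\colon G\wr_X H\to H$, whose kernel is the base $B:=\bigoplus_{x\in X}G_x$. Since $H$ is $\neg$IG, the group $H$ is not an invariable generating set for itself, so I would first fix a function $h\mapsto a_h$ with each $a_h\in H$ such that $M:=\langle a_h^{-1}ha_h\;:\;h\in H\rangle$ is a \emph{proper} subgroup of $H$. The subgroup $L:=\pi^{-1}(M)\le G\wr_X H$ is then proper, since $\pi$ is surjective and $M\ne H$.

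Next I would exhibit a choice of conjugates of the elements of $G\wr_X H$, all of which lie in $L$; this shows that $G\wr_X H$ is not an invariable generating set for itself. Every element of $G\wr_X H$ is of the form $wk$ with $w\in B$ and $k\in H$, and $\pi(wk)=k$. Conjugating such an element by the head element $a_k$ gives $a_k^{-1}(wk)a_k=(a_k^{-1}wa_k)(a_k^{-1}ka_k)$; because $B\trianglelefteq G\wr_X H$ we have $a_k^{-1}wa_k\in B$, so $\pi\big(a_k^{-1}(wk)a_k\big)=a_k^{-1}ka_k\in M$, i.e.\ $a_k^{-1}(wk)a_k\in L$. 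Hence $\big\langle\, a_k^{-1}(wk)a_k \;:\; wk\in G\wr_X H\,\big\rangle\le L\subsetneq G\wr_X H$, and therefore $G\wr_X H$ is $\neg$IG.

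There is essentially no hard step here (this is the ``simple argument by contradiction'' mentioned in the introduction): the only points needing any care are that conjugation in $G\wr_X H$ descends to conjugation in $H$ under $\pi$ — immediate, since $\pi$ is a homomorphism and $B$ is normal — and that $L$ is a proper subgroup, which uses only surjectivity of $\pi$. Equivalently, one could phrase the argument in terms of subgroups meeting every conjugacy class: $M$ meets every conjugacy class of $H$, a conjugate of $wk$ projects under $\pi$ to a conjugate of $k$, and so $L=\pi^{-1}(M)$ is a proper subgroup of $G\wr_X H$ meeting every conjugacy class, which is the same as $G\wr_X H$ being $\neg$IG.
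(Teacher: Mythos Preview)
Your argument is correct and is essentially the paper's own proof, phrased via the quotient map $\pi\colon G\wr_X H\to H$: both choose conjugators $a_h\in H$ witnessing $\neg$IG for $H$, conjugate each $wk$ by $a_k$, and observe that the resulting set cannot generate because its image in $H$ lies in the proper subgroup $M=\langle a_h^{-1}ha_h\rangle$. The paper words the last step as ``cannot contain $H$'' (citing the explicit conjugacy computation of Lemma~\ref{firstcalcs}) rather than ``is contained in $\pi^{-1}(M)$'', but the content is identical.
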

\begin{proof} This follows immediately from Lemma \ref{firstcalcs}. Since $H$ is $\neg$IG, there exist $a_h \in H$ such that $\{a_h^{-1}ha_h\;:\;h \in H\}$ does not generate $H$. Then $\{wh\;:\;h \in H, w \in \bigoplus_{x \in X}G_x\}$ cannot be an invariable generating set for $G\wr_X H$ since using the $a_h$ defined above we have that $\langle a_h^{-1}wha_h\;:\;h \in H, w \in \bigoplus_{x \in X}G_x\rangle$ cannot contain $H$ (and so does not generate $G\wr_X H$).
\end{proof}
The author is unaware of a source for the following, but expects it is well known.
\begin{lem} \label{finitelygen}
Let $G\wr_XH$ be finitely generated. Then $G$ and $H$ are finitely generated. Moreover, there exist $y_1, \ldots, y_d \in X$ such that $\bigcup_{i=1}^dy_iH_X=X$.
\end{lem}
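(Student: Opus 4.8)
The plan is to fix a finite generating set $S=\{w_1k_1,\dots,w_nk_n\}$ of $G\wr_XH$, where $w_i\in\bigoplus_{x\in X}G_x$ and $k_i\in H$, and then to track through arbitrary products of the generators two features of the base part of an element: the finite set of coordinates on which it is supported, and, for a fixed coordinate, the element of $G$ occupying it. The claim that $H$ is finitely generated is immediate, since $H$ is the quotient of $G\wr_XH$ by the base $\bigoplus_{x\in X}G_x$.

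For the rest, set $Y:=\bigcup_{i=1}^n\operatorname{supp}(w_i)$, a finite subset of $X$ since each $w_i$ lies in the direct sum, and set $T:=\{\,w_i^{(x)}\;:\;1\le i\le n,\ x\in X\,\}\subseteq G$, which is finite for the same reason. The heart of the proof is a normal-form computation. Writing each element of $G\wr_XH$ uniquely as $wk$ with $w$ in the base and $k\in H$, one has $wk\cdot w'k'=w\,(kw'k^{-1})\,(kk')$ and $(wk)^{-1}=(k^{-1}w^{-1}k)\,k^{-1}$, and iterating these shows that the base part of a word $(w_{i_1}k_{i_1})^{\epsilon_1}\cdots(w_{i_m}k_{i_m})^{\epsilon_m}$ in the generators is a product of terms of the form $(w_{i_j}^{(z_j)})^{\pm1}$; moreover, by Lemma~\ref{firstcalcs}, each index $z_j$ lies in the same $H_X$-orbit as some point of $\operatorname{supp}(w_{i_j})$. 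I would then record the two consequences: (a) the base part of every element of $G\wr_XH$ is supported inside $\bigcup_{y\in Y}yH_X$, and (b) for each fixed $y\in X$, its $y$-coordinate lies in the subgroup $\langle T\rangle\le G$.

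The two conclusions of the lemma then follow at once. Since $G$ is non-trivial, for each $x\in X$ the subgroup $G_x\le G\wr_XH$ contains an element whose base part is supported exactly at $\{x\}$, so (a) forces $x\in\bigcup_{y\in Y}yH_X$; enumerating $Y$ as $\{y_1,\dots,y_d\}$ gives $\bigcup_{i=1}^dy_iH_X=X$. And for any fixed $y$, every element of $G_y$ has base part supported in $\{y\}$, so by (b) the copy $G\cong G_y$ is contained in $\langle T\rangle$, whence $G=\langle T\rangle$ is finitely generated.

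The step I expect to need the most care is the normal-form bookkeeping: one must respect the convention that $H$ multiplies indices on the right (so conjugation by $k^{-1}$ sends $G_x$ to $G_{xk}$), treat the inverse generators $(w_ik_i)^{-1}$ on the same footing as the positive ones, and check that the cumulative index shifts arising in a length-$m$ word really are elements of $H_X$. Since all of this is precisely the content of Lemma~\ref{firstcalcs}, I anticipate no genuine obstacle beyond keeping the indices straight.
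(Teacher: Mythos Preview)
Your proposal is correct. The argument is sound: the quotient map onto $H$ handles finite generation of $H$, and tracking supports and coordinates through the semidirect-product normal form yields both remaining conclusions. Your caution about the bookkeeping is well placed but, as you say, Lemma~\ref{firstcalcs} contains exactly what is needed.

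The paper's proof reaches the same conclusions by a slightly different device: rather than computing through arbitrary words in the generators $\{w_ih_i\}$, it successively \emph{enlarges} the generating set, first to $\{w_1,\dots,w_n,h_1,\dots,h_n\}$ and then to $\{u_{i,j}^{(x_{i,j})}\}\cup H_X$ (adding the entire head $H_X$, not just the $h_i$). Since each enlargement still generates $G\wr_XH$, one can read off the conclusions from the structure of this last, very simple, generating set without ever writing down a general word. This buys brevity and avoids the inductive normal-form calculation entirely; your approach, by contrast, makes the mechanism (support confined to finitely many $H$-orbits, coordinates confined to $\langle T\rangle$) completely explicit, at the cost of the bookkeeping you flagged. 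Either route is perfectly adequate for this elementary lemma.
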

\begin{proof}
Assume $G\wr_XH$ is finitely generated. Then $\langle w_1h_1, \ldots, w_nh_n\rangle=G\wr_XH$ where for every $i \in \{1, \ldots, n\}$ we have $w_i \in \bigoplus_{x \in X}G_x$ and $h_i \in H$. Now
\begin{equation*}
\langle w_1h_1, \ldots, w_nh_n\rangle \le \langle w_1, \ldots, w_n, h_1, \ldots, h_n\rangle
\end{equation*}
which implies that $H$ is finitely generated. For each $i$, we can choose $k_i \in \N$, $u_{i, 1}, \ldots, u_{i, k_i} \in G$, and $x_{i, 1}, \ldots, x_{i, k_i} \in X$ such that $w_i=u_{i, 1}^{(x_{i, 1})}\ldots u_{i, k_i}^{(x_{i, k_i})}$. Thus
%\begin{equation*}
%\langle w_1, \ldots, w_n, h_1, \ldots, h_n\rangle \le \langle \bigcup_{i=1}^n\bigcup_{j=1}^{k_i} u_{i, j}^{(x_{i, j})}, H\rangle.
%\end{equation*}
\begin{equation*}
\langle w_1, \ldots, w_n, h_1, \ldots, h_n\rangle \le \langle \{u_{i, j}^{(x_{i, j})}\;:\;1\le i\le n\text{ and }1\le j\le k_i\}\cup H_X\rangle.
\end{equation*}
Our assumption that this is equal to $G\wr_X H$ now implies that any $x \in X$ lies in $x_{i, j}H$ for some $i, j \in \N$ and that $\langle u_{i, j}\;:\;1\le i\le n$ and $1\le j\le k_i\rangle=G$.
\end{proof}

The following lemma provides a sufficient condition for a subset of $G\wr_XH$ to generate $G\wr_XH$.

\begin{lem}\label{GHgenerate} Let $\{y_i\;:\;i \in I\}\subseteq X$ have the property that $X=\bigcup_{i \in I}y_iH$, let $S_H$ invariably generate $H$, and $S_H':=\{a_s^{-1}sa_s\;:\;s \in S_H\}$ for some - completely free - choice of $\{a_s\;:\;s \in S_H\}\subseteq G\wr_X H$. Then $\langle \bigcup_{i \in I}G_{y_i} \cup S_H'\rangle =G\wr_X H$.
\end{lem}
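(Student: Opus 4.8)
The plan is to show that the subgroup $K := \langle \bigcup_{i \in I} G_{y_i} \cup S_H' \rangle$ contains both the full base $\bigoplus_{x \in X} G_x$ and a complement isomorphic to $H$; since these together generate $G\wr_X H$, this suffices. First I would observe that because $S_H$ invariably generates $H$ and $S_H'$ is obtained by conjugating each $s \in S_H$ by some $a_s \in G\wr_X H$, Lemma \ref{firstcalcs} tells us that each element of $S_H'$ has the form $w_s h_s$ with $h_s$ an $H$-conjugate of $s$ and $w_s \in \bigoplus_{x \in X} G_x$. The key point is that $\langle h_s : s \in S_H \rangle = H$, since replacing each $s$ by an $H$-conjugate still generates $H$ (this is exactly the defining property of an invariable generating set). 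Hence the image of $K$ under the quotient map $G\wr_X H \twoheadrightarrow H$ is all of $H$.

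Next I would show $\bigoplus_{x \in X} G_x \le K$. Fix $i \in I$; then $G_{y_i} \le K$ by hypothesis. Since the projection of $K$ onto $H$ is surjective, for any $k \in H$ there is an element $vk \in K$ with $v \in \bigoplus_{x\in X} G_x$. Conjugating $G_{y_i}$ by $vk$: by the first part of Lemma \ref{firstcalcs}, for $g \in G_{y_i}$ we get $(vk)^{-1} g (vk) \in G_{y_i k}$, and moreover the computation in that lemma shows this conjugate is $f^{(y_i k)}$ where $f = v^{-1} g v$ is a $G$-conjugate of $g$ — but since $G_{y_i}$ is a group and $v^{(y_i)}$ ranges over $G_{y_i}$ only through its $y_i$-component (the other components of $v$ commute with $g^{(y_i)}$), as $g$ ranges over $G_{y_i}$ the conjugate ranges over all of $G_{y_i k}$. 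Therefore $G_{y_i k} \le K$ for every $k \in H$, i.e. $G_x \le K$ for every $x \in y_i H$. Taking the union over $i \in I$ and using $X = \bigcup_{i\in I} y_i H$ gives $G_x \le K$ for all $x \in X$, hence $\bigoplus_{x\in X} G_x \le K$.

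Finally, combining the two parts: $K$ contains the base $\bigoplus_{x \in X} G_x$, and $K/(K \cap \text{base}) = K \cdot \text{base}/\text{base}$ surjects onto $H = (G\wr_X H)/\text{base}$; since the base is contained in $K$, this forces $K = G\wr_X H$. The main obstacle — really the only subtle point — is the middle step: one must be careful that conjugating the subgroup $G_{y_i}$ (rather than a single element) by an arbitrary base-times-head element $vk$ lands on the \emph{entire} subgroup $G_{y_i k}$ and not merely on some coset or conjugate-twisted copy; the computation inside Lemma \ref{firstcalcs} already does the work, since conjugation by $v$ fixes $G_{y_i}$ setwise (it only twists by the $y_i$-component of $v$, an inner automorphism of $G$) and conjugation by $k$ then carries $G_{y_i}$ isomorphically onto $G_{y_i k}$. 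Everything else is bookkeeping.
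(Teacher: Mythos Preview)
Your proposal is correct and follows essentially the same route as the paper: decompose each element of $S_H'$ as $w_s h_s$ via Lemma~\ref{firstcalcs}, use invariable generation of $H$ to see that $K$ surjects onto $H$, conjugate each $G_{y_i}$ by elements of the form $vk$ to obtain every $G_x$, and then conclude. The only cosmetic difference is that the paper finishes by observing $w_s\in K$ hence $h_s\in K$, whereas you phrase the same step via the quotient map; your extra care in checking that $(vk)^{-1}G_{y_i}(vk)$ is all of $G_{y_ik}$ is exactly the computation the paper compresses into the line $(wk)^{-1}G_x(wk)=G_{xk}$.
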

\begin{proof} From Lemma \ref{firstcalcs}, $S_H'=\{w_sh_s\;:\;s \in S_H\}$ where $w_s \in \bigoplus_{x \in X}G_x$, $h_s \in H$ for every $s \in S_H$, and every $h_s$ is $H$-conjugate to an element in $S_H$. Moreover $\langle h_s\;:\;s \in S_H\rangle=H$ from our hypothesis that $S_H$ invariably generates $H$. Hence, given any $k \in H$, there exists an element $wk \in\langle S_H'\rangle$ where $w \in \bigoplus_{x \in X}G_x$. Then $(wk)^{-1}G_x(wk)=G_{xk}$ and so $\bigoplus_{x \in X}G_x\le \langle \bigcup_{i \in I}G_y \cup S_H'\rangle$. Therefore, for every $s \in S_H$, we have that $w_s \in \langle \bigcup_{i \in I}G_y \cup S_H'\rangle$ and so this set also contains $\{h_s\;:\;s \in S_H\}$.
\end{proof}
Our aim is now to take a generating set $S$ made from invariable generating sets in $G$ and $H$ and show that  this set $S$ invariably generates $G\wr_XH$. We do this by showing that replacing the elements of $S$ with any conjugates results in a set that satisfies the sufficient condition introduced in the previous lemma.
\begin{prop}\label{firstargument} Let $G$ and $H$ be FIG. Then $G\wr_X H$ is either FIG or IG. Moreover, $G\wr_X H$ is FIG if and only if it is finitely generated. In particular, $G\wr H$ is FIG.
\end{prop}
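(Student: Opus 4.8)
The plan is to exhibit, starting from finite invariable generating sets $S_G$ of $G$ and $S_H$ of $H$ (these exist because $G$ and $H$ are FIG), a candidate invariable generating set built from them, and to show that no choice of conjugates can prevent it from generating. The construction will be parametrised by a subset $\{y_i:i\in I\}\subseteq X$ meeting every $H$-orbit, i.e.\ with $X=\bigcup_{i\in I}y_iH$: for such a subset I would consider
\[
S_I=\{\,s^{(y_i)}:i\in I,\ s\in S_G\,\}\ \cup\ S_H .
\]
Taking $I$ to be all of $X$ is always possible and will give that $G\wr_XH$ is invariably generated; Lemma \ref{finitelygen} will let $I$ be finite exactly when $G\wr_XH$ is finitely generated, giving FIG.

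To show $S_I$ invariably generates $G\wr_XH$ I would fix arbitrary conjugates of its elements and let $N$ be the subgroup they generate. By Lemma \ref{firstcalcs} the chosen conjugates of the elements of $S_H$ form a set $S_H'=\{w_sh_s:s\in S_H\}$ with each $w_s$ in the base and each $h_s$ an $H$-conjugate of $s$; since $S_H$ invariably generates $H$, these $h_s$ generate $H$, so $N$ surjects onto $H$ under the quotient map $G\wr_XH\to H$. Again by Lemma \ref{firstcalcs} the chosen conjugate of $s^{(y_i)}$ has the form $f_{s,i}^{(y_ik_{s,i})}$ with $k_{s,i}\in H$ and $f_{s,i}\sim_G s$. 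Using the surjection $N\twoheadrightarrow H$ I would pick $n_{s,i}\in N$ mapping to $k_{s,i}^{-1}$; writing $n_{s,i}=v\,k_{s,i}^{-1}$ with $v$ in the base, Lemma \ref{firstcalcs} gives $n_{s,i}^{-1}f_{s,i}^{(y_ik_{s,i})}n_{s,i}=g_{s,i}^{(y_i)}\in N$ with $g_{s,i}$ still a $G$-conjugate of $s$. Thus for each $i$ the subgroup $N$ contains $g_{s,i}^{(y_i)}$ for every $s\in S_G$, and these elements all lie in the single coordinate group $G_{y_i}$; as $S_G$ invariably generates $G$, $\langle g_{s,i}^{(y_i)}:s\in S_G\rangle=G_{y_i}$, so $G_{y_i}\le N$ for all $i$. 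Now Lemma \ref{GHgenerate}, applied with the family $\{y_i\}$, with $S_H$, and with the conjugates $S_H'$, gives $N\supseteq\langle\bigcup_{i\in I}G_{y_i}\cup S_H'\rangle=G\wr_XH$, so $S_I$ invariably generates.

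The proposition then follows quickly. With $I=X$ this shows $G\wr_XH$ is invariably generated, i.e.\ FIG or IG. If in addition $G\wr_XH$ is finitely generated, Lemma \ref{finitelygen} yields a finite $\{y_1,\dots,y_d\}\subseteq X$ with $X=\bigcup_i y_iH$, so the corresponding $S_I$ is finite and $G\wr_XH$ is FIG; conversely every FIG group is finitely generated, since a finite invariable generating set is in particular a finite generating set. For the regular wreath product $G\wr H$ the action of $H$ on $X=H$ is transitive, so a one-element set meets every orbit; as $G$ and $H$ are FIG they are finitely generated, hence $G\wr H$ is finitely generated and therefore FIG.

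I expect the heart of the argument to be the single move in the middle paragraph of turning the displaced conjugate $f_{s,i}^{(y_ik_{s,i})}$ back into an element of the coordinate group $G_{y_i}$: conjugating by an element of $N$ necessarily permutes coordinates as well, so one must first secure the surjection $N\twoheadrightarrow H$ from the $S_H$-part and only then realign the $S_G$-part, at the price of replacing each element by a further $G$-conjugate --- which is harmless precisely because $S_G$ was chosen to be an \emph{invariable} generating set of $G$. Everything else should be routine bookkeeping with Lemma \ref{firstcalcs} and Lemma \ref{GHgenerate}.
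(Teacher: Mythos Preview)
Your proposal is correct and follows essentially the same route as the paper's proof: both build the candidate set $\bigcup_{i\in I}\{s^{(y_i)}:s\in S_G\}\cup S_H$, use Lemma~\ref{firstcalcs} to see that conjugates of the $S_H$-part still project onto all of $H$, then use an element of the generated subgroup with prescribed $H$-component to transport each displaced conjugate $f_{s,i}^{(y_ik_{s,i})}$ back into $G_{y_i}$ (at the cost of a further $G$-conjugation, harmless since $S_G$ invariably generates $G$), and finish with Lemma~\ref{GHgenerate}. Your phrasing via the surjection $N\twoheadrightarrow H$ is exactly the paper's observation that $\langle w_1h_1',\dots,w_mh_m'\rangle$ contains an element of the form $u_{i,j}k_{i,j}$ for any desired $k_{i,j}\in H$.
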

\begin{proof} Let $Y:=\{y_i\;:\;i \in I\}\subseteq X$ have the property that $X=\bigcup_{y \in Y}yH$, and let $\{g_1, \ldots, g_n\}$ and $\{h_1, \ldots, h_m\}$ be finite invariable generating sets for $G$ and $H$. From Lemma \ref{finitelygen}, if $G\wr_XH$ is finitely generated, then $Y$ can be chosen to be finite. Our claim is that $\bigcup_{i \in I}\{g^{(y_i)}_1, \ldots, g^{(y_i)}_n\}\cup \{h_1, \ldots, h_m\}$ is an invariable generating set for $G\wr_X H$. From Lemma \ref{firstcalcs}, any $g_j^{(y_i)}$ can only be conjugate to an element of the form $f_{i,j}^{(x_{i,j})} \in G_{x_{i,j}}$ where $x_{i,j} \in X$ and $f_{i,j}^{(x_{i,j})}$ is $G_{x_{i,j}}$-conjugate to $g_j^{(x_{i,j})}$. Also each $h_i$ is conjugate to an element $w_ih_i'$ where $w_i \in \bigoplus_{x \in X}G_x$, $h_i'\in H$, and $h_i'$ is $H$-conjugate to $h_i$.

Note that $\langle h_1', \ldots, h_m'\rangle=H$ from our assumption that $\{h_1, \ldots, h_m\}$  invariably generates $H$. So, for each $x_{i,j} \in X$ there exists $k_{i,j} \in H$ such that $x_{i,j}k_{i,j}=y_i$ and there exists $u_{i,j} \in \bigoplus_{x \in X}G_x$ such that $u_{i,j}k_{i,j} \in \langle w_1h_1', \ldots, w_mh_m'\rangle$. Now, for each $i \in I$ and $j \in \{1, \ldots, n\}$, let
\begin{equation*}
a_{i,j}:=(u_{i,j}k_{i,j})^{-1}f_i^{(x_{i,j})}(u_{i,j}k_{i,j}).
\end{equation*}
Note that $a_{i,j} \in G_{y_i}$ for every $i$. Moreover, since $\{g_1, \ldots, g_n\}$ invariably generates $G$, we have that $\langle a_{i, 1}, \ldots, a_{i, n}\rangle =G_{y_i}$ for every $i \in I$. Lemma \ref{GHgenerate} yields the result.
\end{proof}
The above argument also works if $\{g_1, \ldots, g_n\}$ and $\{h_1, \ldots, h_m\}$ are replaced with any invariable generating sets for $G$ and $H$.
We finish this section with the final result that does not depend on whether or not $H_X$ is of torsion-type.

%
%The following is clear, but is included for completeness.
%\begin{lem}\label{wpnotfg} If either $G$ or $H$ is not finitely generated, then $G\wr H$ is not finitely generated.
%\end{lem}
%\begin{proof} Let us assume that $ w_1h_1,\ldots w_nh_n$ generate $G\wr H$, where each $w_i$ is in $\bigoplus_{x \in X}G_x$ and each $h_i$ is in $H$. For both cases we can start with the observation that
%\begin{equation*}
%\langle w_1h_1,\ldots, w_nh_n\rangle\le \langle w_1, \ldots, w_n, h_1, \ldots, h_n\rangle.
%\end{equation*}
%If $H$ is not finitely generated, this observation finishes the argument, since $h_1, \ldots, h_n$ cannot generate $H$, and so $\langle w_1, \ldots, w_n, h_1, \ldots, h_n\rangle\ne G\wr H$.
%
%We now assume that $G$ is not finitely generated. For each $i$, $w_i=u_{i, 1}\ldots u_{i, k_i}$ where each $u_{i, j}$ lies in some $G_j$. Thus, for a fixed $e \in H$,
%\begin{equation*}
%\begin{split}
%\langle w_1, \ldots, w_n, h_1, \ldots, h_n\rangle&\le \langle w_1,\ldots,w_n, H\rangle\\
%&\le\langle u_{1, 1}^{(e)}, \ldots, u_{1, k_1}^{(e)}, u_{2, 1}^{(e)}, \ldots, u_{2, k_2}^{(e)}, \ldots, u_{n, 1}^{(e)}, \ldots, u_{n, k_n}^{(e)}, H\rangle.
%\end{split}
%\end{equation*}
%But if $\{u_{1, 1}^{(e)}, \ldots, u_{1, k_1}^{(e)}, u_{2, 1}^{(e)}, \ldots, u_{2, k_2}^{(e)}, \ldots, u_{n, 1}^{(e)}, \ldots, u_{n, k_n}^{(e)}\}$ generates $G_e$, then $G$ is finitely generated, a contradiction.
%\end{proof}
\begin{lem} If $G$ is IG, $H$ is FIG, and $G\wr_X H$ is not finitely generated, then $G\wr_XH$ is IG.
\end{lem}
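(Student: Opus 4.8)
We want to show that if $G$ is IG (invariably generated, but not FIG), $H$ is FIG, and $G\wr_X H$ is not finitely generated, then $G\wr_X H$ is IG. Since $H$ is FIG and invariable generation is extension closed (\cite[Cor.\ 2.3(iii)]{intro}), the group $G\wr_XH$ is at least IG in the weak sense: it is invariably generated by some set. The point of the lemma is the stronger claim that it cannot be \emph{finitely} invariably generated. So the plan is to argue by contradiction: suppose $S=\{s_1,\ldots,s_N\}$ is a finite subset of $G\wr_XH$ and derive that some choice of conjugates of the $s_i$ fails to generate.

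\textbf{Step 1: reduce to the base.} Write each $s_i=w_i h_i$ with $w_i\in\bigoplus_{x\in X}G_x$ and $h_i\in H$. Fix a set $\{y_j\;:\;j\in J\}\subseteq X$ of orbit representatives for $H_X$ on $X$, so $X=\bigcup_{j\in J}y_jH$. Because $G\wr_XH$ is not finitely generated, Lemma \ref{finitelygen} tells us $J$ must be infinite (if $J$ were finite and $G,H$ finitely generated the group would be finitely generated; and $G$ is certainly finitely generated only if it is FIG or… — more carefully: if $G\wr_XH$ were finitely generated then by Lemma \ref{finitelygen} both $G$ and $H$ are finitely generated and $X$ is covered by finitely many orbits; so ``not finitely generated'' forces $G$ not finitely generated, $H$ not finitely generated, or $J$ infinite). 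In fact the cleanest route: since $G$ is IG it is in particular not finitely generated (an FIG group is finitely generated, and $G$ is assumed merely IG, not FIG) — but being IG does not force $G$ to be infinitely generated in general, so I will not rely on that; I will instead work orbit-by-orbit.

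\textbf{Step 2: for each orbit, conjugate the finitely many relevant coordinates into one copy of $G$.} Fix an orbit $y_jH$. Only finitely many of the elements $g^{(x)}$ appearing in $w_1,\ldots,w_N$ have $x$ in this orbit; call those coordinates $x_{j,1},\ldots,x_{j,k_j}\in y_jH$. Since $H$ is FIG and hence IG, and by Lemma \ref{firstcalcs} any conjugate of $h_i$ is $w_i'h_i'$ with $h_i'$ an $H$-conjugate of $h_i$, after choosing conjugates of the $s_i$ we can still reach every $k\in H$ inside $\langle$conjugates of the head parts$\rangle$, hence reach elements $uk$ for suitable $u$. As in the proof of Proposition \ref{firstargument}, this lets us conjugate each $s_i$ so that all its coordinates in the orbit $y_jH$ land inside the single group $G_{y_j}$; call the resulting coordinate in $G_{y_j}$ the element $c_{i,j}\in G$. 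Crucially, the collection $\{c_{1,j},\ldots,c_{N,j}\}$ is obtained from the \emph{data of $S$} together with a choice of conjugator, and the set of elements of $G$ that can arise this way (as we vary the conjugators) is contained in the union of at most $N$ conjugacy classes of $G$ — one per $s_i$ — so it is a set of at most $N$ conjugacy-class representatives, up to the twist coming from the $h_i'$. More precisely: the coordinate of $a^{-1}s_ia$ in $G_{y_j}$ is a $G$-conjugate of a fixed product depending only on $s_i$ and on which coordinates of $w_i$ sat in that orbit.

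\textbf{Step 3: invoke that $G$ is not FIG.} For each $i\in\{1,\ldots,N\}$ let $g_i\in G$ be the fixed element (product of the orbit-$y_j$ coordinates of $w_i$, suitably normalised) whose $G$-conjugates can appear as $c_{i,j}$. Since $G$ is not FIG, the finite set $\{g_1,\ldots,g_N\}$ does not invariably generate $G$: there exist conjugates $g_i'=b_i^{-1}g_ib_i$ with $\langle g_1',\ldots,g_N'\rangle\neq G$. Choose the conjugators $a$ of the $s_i$ so that the orbit-$y_j$ coordinate $c_{i,j}$ equals $g_i'$ for all $i$ — this is where Step 2 must be done carefully enough that the $G_{y_j}$-coordinate can be prescribed to be \emph{any} chosen conjugate of $g_i$, independently of what happens in the other orbits. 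Then the subgroup generated by $\{a^{-1}s_ia\}$, intersected with $G_{y_j}$, is contained in $\langle g_1',\ldots,g_N'\rangle^{(y_j)}\subsetneq G_{y_j}$: indeed any element of the generated subgroup lying in $G_{y_j}$ is a word in the generators, and reading off the $y_j$-coordinate shows it lies in $\langle g_1',\ldots,g_N'\rangle$ (here one uses that the head parts permute the $G_x$ among themselves, so a word evaluating into $G_{y_j}$ has $y_j$-coordinate built from the $c_{i,j'}$ along the orbit, all of which are $G$-conjugates lying in $\langle g_1',\ldots,g_N'\rangle$ after the conjugation — this needs a small argument to ensure we only ever ``land back'' in coordinates we have controlled). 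Hence $\langle a^{-1}s_1a,\ldots,a^{-1}s_Na\rangle\neq G\wr_XH$, contradicting that $S$ invariably generates. Therefore no finite $S$ works, so $G\wr_XH$ is not FIG; combined with the extension-closure observation, it is IG.

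\textbf{Main obstacle.} The delicate point is Step 2 combined with the coordinate-reading argument in Step 3: I must make the conjugation simultaneously do two things — (a) collapse, within each orbit, the finitely many coordinates of each $w_i$ into the single copy $G_{y_j}$, producing there a \emph{prescribed} conjugate $g_i'$ of the fixed element $g_i$; and (b) not introduce, via the head parts $h_i'$, any way for products of generators to reach coordinates of $G_{y_j}$ that were not accounted for. For (b) the key fact is the first half of Lemma \ref{firstcalcs}: conjugation by $wk$ sends $G_y$ to $G_{yk}$, so in any word in the conjugated generators, the ``$G_{y_j}$-part'' is a product of $G$-conjugates of the $g_i'$ — which stay inside $\langle g_1',\ldots,g_N'\rangle$. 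The real work is checking that the conjugators needed for (a) exist with enough freedom; this is exactly the computation carried out in Proposition \ref{firstargument}, using that $H$ is IG to reach every $k\in H$, and I would cite and adapt that argument rather than redo it.
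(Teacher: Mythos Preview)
You have the first half exactly right: extension closure (applied to the base, which is a direct sum of copies of the invariably generated group $G$ and hence itself invariably generated, and to the FIG quotient $H$) shows that $G\wr_X H$ is invariably generated. That is precisely the paper's entire proof.

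What you miss is that the second half --- showing $G\wr_X H$ is not FIG --- is immediate from the hypothesis and requires no work at all. A finite invariable generating set is, upon choosing identity conjugators, a finite generating set. Since $G\wr_X H$ is assumed \emph{not finitely generated}, no finite subset $S$ generates it, so certainly no finite subset invariably generates it. Your contradiction hypothesis ``suppose $S=\{s_1,\ldots,s_N\}$ invariably generates'' dies instantly: take $a_{s_i}=e$ for all $i$, and $\langle s_1,\ldots,s_N\rangle\ne G\wr_X H$ already.

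Everything from Step 1 onward is therefore unnecessary. Worse, the argument you sketch in Steps 2--3 is essentially the torsion-type computation from Section 3 (Lemmas \ref{cosetreps} and \ref{conjworks}), and it genuinely fails when $H_X$ is not of torsion-type: in that case one cannot in general conjugate $w_ih_i$ so as to prescribe an arbitrary $G$-conjugate in a chosen coordinate --- this is exactly why Theorem \ref{mainthmB} needs a completely different mechanism. Your ``Main obstacle'' is a real obstacle, not a technicality, and citing Proposition \ref{firstargument} does not help, since that proposition shows conjugates \emph{do} generate, not that they can be made to fail. So the elaborate route is both superfluous and, as written, incomplete.
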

\begin{proof} Invariable generation is extension closed.
\end{proof}

\section{The case where $H_X$ is of torsion-type}
The notion of torsion-type generalises the case where the head of $G\wr H$ is torsion. Note that if $H$ acts transitively on $X$, then the definition becomes much simpler.
\begin{defn*} Let $H_X\le \Sym(X)$ denote the group $H$ together with a faithful action of $H$ on $X$. Then $H_X$ is of torsion-type if there exists a $y \in X$ such that for all $x \in yH_X$ and all $k \in H_X$, $x\langle k\rangle$ is finite.
\end{defn*}
For any $k \in H$ and $w = \bigoplus_{x \in X}w_x^{(x)}$ where each $w_x^{(x)}\in G_x$, we will show that the torsion-type hypothesis on $H_X$ means that $wk$ is conjugate to an element roughly of the form $\bigoplus_{x \in X}(a_x^{-1}w_xa_x)^{(x)}k$ where each $a_x \in G_x$. We do this in two stages.
\begin{lem} \label{cosetreps} Let $H_X$ be of torsion-type, $y \in X$, $k \in H$, $C:=y\langle k\rangle$, $u \in \bigoplus_{x \in C}G_x$, and $v \in \bigoplus_{x \in X\setminus C}G_x$. Then $uvk$ is conjugate to $u_*vk$, where $u_* \in G_y$.
\end{lem}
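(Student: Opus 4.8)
The plan is to produce the conjugating element explicitly: I would conjugate $uvk$ by a suitable element $t$ of the base $\bigoplus_{x\in X}G_x$ supported on the orbit $C$, choosing the coordinates of $t$ by solving a telescoping system of equations that runs around $C$. First I would record what conjugation by such a $t$ does. Since $k$ permutes $C$, a direct index computation (of the kind in Lemma \ref{firstcalcs}) shows that $ktk^{-1}$ is again supported on $C$; and $v$, being supported on $X\setminus C$, commutes with every element supported on $C$. Hence
\begin{equation*}
t^{-1}(uvk)t=t^{-1}uv(ktk^{-1})k=\big(t^{-1}u(ktk^{-1})\big)vk,
\end{equation*}
so the conjugate is again of the form $u'vk$ with $u'=t^{-1}u(ktk^{-1})\in\bigoplus_{x\in C}G_x$. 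It therefore suffices to pick $t$ making $u'$ supported on the single index $y$.

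Next I would carry out the coordinate computation inside $\bigoplus_{x\in C}G_x$. The torsion-type hypothesis guarantees that $C=y\langle k\rangle$ is finite, so write $C=\{y,yk,\dots,yk^{n-1}\}$ and index coordinates by $\Z/n$ via $i\leftrightarrow yk^i$; set $u=\bigoplus_i u_i^{(yk^i)}$ and $t=\bigoplus_i s_i^{(yk^i)}$ with the $s_i\in G$ to be chosen. Using the convention $k^{-1}g^{(x)}k=g^{(xk)}$, one computes that the $i$-th coordinate of $ktk^{-1}$ is $s_{i+1}$, hence the $i$-th coordinate of $u'$ is $s_i^{-1}u_i s_{i+1}$. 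Demanding that this be trivial for every $i\neq0$ yields the recursion $s_{i+1}=u_i^{-1}s_i$; normalising by $s_0=1$ this determines $s_1,\dots,s_{n-1}$ consistently around the cycle, and the leftover $0$-th coordinate works out to the ordered product $u_*:=u_0u_1\cdots u_{n-1}\in G$. Since $t$ is then supported on the finite set $C$ it genuinely lies in the base, and $t^{-1}(uvk)t=u_*^{(y)}vk$, which is the assertion.

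The only real subtlety is bookkeeping with the $H$-action on indices: one must check that conjugating by $t$ really does "slide" the base-components of $u$ one step around the orbit $C$, so that the constraints assemble into a single recursion that closes up consistently on the cycle. This is precisely where torsion-type enters — via finiteness of $C$ it makes the chosen $t$ automatically of finite support, hence a bona fide element of $\bigoplus_{x\in X}G_x$; one could in fact solve the same recursion on an infinite orbit, but then one would have to check by hand that the solution is eventually trivial in both directions (it is, after adjusting the normalisation of $s_0$). Apart from this the argument is a routine, if slightly fiddly, direct calculation, and I do not anticipate any further obstacle.
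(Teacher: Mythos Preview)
Your argument is correct and is essentially the paper's proof, just packaged differently: the paper performs $d$ successive conjugations by single-coordinate base elements $u_d^{(yk^d)},\ (u_{d-1}u_d)^{(yk^{d-1})},\ \ldots$, whereas you solve for the product of these conjugators in one shot via your recursion for the $s_i$. Both produce the same conjugating element supported on $C$ and the same $u_*=u_0u_1\cdots u_{n-1}$.
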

\begin{proof} Let $|y\langle k\rangle|=d+1$ and $u=u_0^{(y)}u_1^{(yk)}\ldots u_d^{(yk^d)}$ where $u_0, \ldots, u_d \in G$. Then
\begin{align*}
uvk&=vu_0^{(y)}u_1^{(yk)}\ldots u_d^{(yk^d)}k\\
&\sim (u_d^{-1})^{(yk^d)}vu_0^{(y)}u_1^{(yk)}\ldots u_d^{(yk^d)}ku_d^{(yk^d)}\\
&=vu_0^{(y)}u_1^{(yk)}\ldots u_{d-1}^{(yk^{d-1})}\big(ku_d^{(yk^{d})}k^{-1}\big)k\\
&=vu_0^{(y)}u_1^{(yk)}\ldots u_{d-1}^{(yk^{d-1})}u_{d}^{(yk^{d-1})}k\\
&\sim\ldots\\
&=vu_*^{(y)}k.\qedhere
\end{align*}
\end{proof}
The second stage introduces the elements, in $G$, that we wish to conjugate by.
\begin{lem} \label{conjworks} Let $H_X$ be of torsion-type, $y \in X$, $k \in H$, and $C:=y\langle k\rangle$. Then $u^{(y)}\bigoplus_{x \in X\setminus C}v_x^{(x)}k\sim(g^{-1}ug)^{(y)}\bigoplus_{x \in X\setminus C}v_x^{(x)}k$ for every $g \in G$.
\end{lem}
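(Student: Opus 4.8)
The plan is to use the fact that $C = y\langle k\rangle$ is finite (the torsion-type hypothesis applied at $y$) to produce, for any given $g \in G$, a conjugating element supported entirely on $C$ that moves $u^{(y)}$ to $(g^{-1}ug)^{(y)}$ while leaving the $H$-part equal to $k$ and not disturbing the coordinates outside $C$. Write $|C| = d+1$, so that $C = \{y, yk, \dots, yk^d\}$ with $yk^{d+1} = y$. The conjugating element will be built from the ``diagonal'' element $b := g^{(y)}g^{(yk)}\cdots g^{(yk^d)} \in \bigoplus_{x \in C} G_x$, or rather a truncation of it; the point is that conjugating $u^{(y)}k$ by a single $g^{(yk^i)}$ both introduces a conjugate of $g$ on one coordinate of $C$ and shifts a factor along the cycle, so that a telescoping product of such conjugations collapses to the desired element.

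Concretely, I would first record the basic move: for $g \in G$ and $0 \le i \le d$,
\begin{equation*}
(g^{-1})^{(yk^i)} \big(u^{(y)} w\, k\big)\, g^{(yk^i)} = (g^{-1})^{(yk^i)} u^{(y)} w\, g^{(yk^{i-1})}\, k,
\end{equation*}
using $k g^{(yk^i)} k^{-1} = g^{(yk^{i-1})}$ (indices read mod $d+1$ within $C$), where $w := \bigoplus_{x \in X\setminus C} v_x^{(x)}$ commutes with everything supported on $C$. Then I would conjugate successively by $g^{(yk^d)}, g^{(yk^{d-1})}, \dots, g^{(yk^1)}, g^{(y)}$ — exactly the strategy already used in Lemma \ref{cosetreps}, but now inserting $g$'s rather than removing $u$'s. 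Tracking the $G_y$-coordinate through this sequence: the first $d$ conjugations push a $g$ around the cycle until it lands on $G_y$ as a left factor, and the final conjugation by $g^{(y)}$ produces the right factor, yielding $(g^{-1} u g)^{(y)}$ on coordinate $y$; every intermediate $g^{\pm 1}$ inserted on a coordinate in $C\setminus\{y\}$ cancels with its partner, and $w$ is untouched throughout. Hence $u^{(y)} w\, k \sim (g^{-1}ug)^{(y)} w\, k$, which is the claim.

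The only real thing to check is the bookkeeping in the telescoping product — that after conjugating by $\prod_{i=d}^{1} g^{(yk^i)}$ and then by $g^{(y)}$, all contributions on coordinates other than $y$ genuinely cancel and the $H$-component remains $k$. This is routine given the shift identity $k g^{(x)} k^{-1} = g^{(xk^{-1})}$ and the commuting of $w$ with $\bigoplus_{x\in C} G_x$; finiteness of $C$ is what makes the product finite and hence legitimate. No subtlety arises from $G$ being nonabelian, since at each step we only ever conjugate a single coordinate of $G_y$ by a single element of $G$. I would present the argument as a short chain of $\sim$'s mirroring the display in Lemma \ref{cosetreps}, then conclude.
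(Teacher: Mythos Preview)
Your proposal is correct and is essentially the paper's argument: the paper conjugates $u^{(y)}\bigoplus_{x \in X\setminus C}v_x^{(x)}k$ by the single diagonal element $g^{(y)}g^{(yk)}\cdots g^{(yk^d)}$ and notes that ``after much cancelling'' the result follows, and your successive conjugations by $g^{(yk^d)},\dots,g^{(yk)},g^{(y)}$ amount to conjugating by exactly this product (the factors commute), with your telescoping display making explicit the cancellation the paper elides. The parenthetical ``or rather a truncation of it'' is unnecessary---the full diagonal element $b$ is the conjugator---but otherwise the bookkeeping is right.
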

\begin{proof} Let $|y\langle k\rangle|=d+1$. For any $g \in G$, we can conjugate $u^{(y)}\bigoplus_{x \in X\setminus C}v_x^{(x)}k$ by $g^{(y)}g^{(yk)}\ldots g^{(yk^d)}$. After much cancelling, we obtain the result.
%which after much cancelling leads to
%\begin{align*}
%(g^{-1}ug)^{(x'h')}\bigoplus_{x \in X\setminus C}v_x^{(x)}k.
%\end{align*}
\end{proof}

We can now prove Theorem \ref{mainthmA}, which finishes the classification of $G\wr_X H$ for when $H_X$ is of torsion-type.

\begin{proof}[Proof of Theorem \ref{mainthmA}] We assume that $H_X$ is of torsion-type. Our aims are the following:
\begin{enumerate}[i)]
\item If $H_X$ is FIG, and $G$ is IG, then $G\wr_X H$ is IG;
\item If $H_X$ is FIG or IG, and $G$ is $\neg$IG, then $G\wr_X H$ is $\neg$IG.
\end{enumerate}

Let $y \in X$ be chosen so that $Y:=yH_X$ is a set such that $x\langle k\rangle$ is finite for all $x \in Y$ and $k \in H_X$. Such a $y$ exists due to our assumption that $H_X$ is torsion-type.

Let $w = \bigoplus_{x \in X}w_x^{(x)}$, which we can write as $\bigoplus_{x \in Y}w_x^{(x)}v$ for some $v \in \bigoplus_{x \in X\setminus Y}G_x$. Then the set of conjugates of $w$ contains $\bigoplus_{x \in Y}(a_x^{-1}w_xa_x)^{(x)}v$ for any choice of $\{a_x \in G\;:\;x \in Y\}$.

Next consider $wk$ with $w$ as above and $k \in H$. Then $w = \big(\bigoplus_{x \in F}w_x\big)v$ for some finite set $F\subseteq Y$ and some $v \in \bigoplus_{x \in X\setminus Y}G_x$. Note that
$$F\subseteq x_1\langle k\rangle \cup x_2\langle k\rangle\cup \ldots x_n\langle k\rangle$$
for some $x_1, \ldots, x_n \in X$, where $x_i\langle k\rangle\cap x_j\langle k\rangle= \emptyset$ for all $i\ne j$.

By applying Lemma \ref{cosetreps} and then repeatedly applying Lemma \ref{conjworks}, we have
\begin{align*}
wk=\Big(\bigoplus_{x \in F}w_x^{(x)}\Big)vk\sim \Big(\bigoplus_{i=1}^nu_i^{(x_i)}\Big)vk\sim \Big(\bigoplus_{i=1}^n(a_i^{-1}u_ia_i)^{(x_i)}\Big)vk
\end{align*}
where we have complete choice over $a_1, \ldots, a_n \in G$.

Hence, both when $k=e_H$ and $k\ne e_H$, $wk$ is conjugate to an element with each component in $Y$ either trivial or of the form $a^{-1}ua$ where $a$ is free to choose.

We will now show that $S=\{w_jh_j\;:\;w_j \in \bigoplus_{x \in X}G_x, h_j \in H\}$ cannot invariably generate $G\wr_X H$ (assuming that $S$ is finite to prove (i) and assuming no restriction on $S$ to prove (ii)). We may conjugate each $w_jh_j \in S$ to an element of the form $(a_{j, 1}^{-1}w_{j, 1}a_{j, 1})^{(y_{j, 1})}\ldots (a_{j, k_j}^{-1}w_{j, k_j}a_{j, k_j})^{(y_{j, k_j})}v_jh_j$ where $k_j \in \N$, $v_j \in\bigoplus_{x \in X\setminus Y}G_x$, $a_{j, l}, w_{j, l} \in G_{y_{j, l}}$ for some $y_{j, l}\in Y$, and we are free to choose each $a_{j,l}$. For each $j \in S$ and $1\le l\le k_j$ let $w_{j,l}':=(a_{j, l}^{-1}w_{j, l}a_{j, l})^{(y_{j, l})}$. With a philosophy similar to that in the proof of Lemma \ref{finitelygen}, we have
\begin{equation*}
\begin{split}
\langle w_{j, 1}'\ldots w_{j, k_j}'v_jh_j\;:\;j \in S\rangle &\le \langle \{w_{j, 1}'\ldots w_{j, k_j}'v_j\;:\;j \in S\}\cup H_X\rangle\\
&\le \langle \bigcup_{x \in X\setminus Y}\!\!\!\!G_x\cup \{w_{j, l}'\;:\;j \in S, 1\le l\le k_j\}\cup H_X\rangle
\end{split}
\end{equation*}
which can only generate $G\wr_X H$ if $A=\{(a_{j, l}^{-1}w_{j, l}a_{j, l})^{(y)}\in G_y\;:\;j \in S, 1\le l\le k_j\}$ generates $G_y$. If $G$ is $\neg$IG, then we can choose the elements $a_{j, l}$ so that this is not the case, implying that $G\wr_X H$ is $\neg$IG. For (i), we may assume that no finite set invariably generates $G$. Thus, since any finite choice of $S$ results in $A$ being finite, there exists a choice of $\{a_{j, l}\;:\;j \in S, 1\le l\le k_j\}$ such that $\langle A\rangle\ne G$. Hence if $G$ is IG, then $G\wr_X H$ is not FIG, and so must be IG.
\end{proof}
In our final section we deal with the case where the above argument cannot be applied i.e.\ where $H_X$ is not of torsion-type.

\section{The case where $H_X$ is not of torsion-type}

The following proposition introduces our invariable generating set for $G\wr_XH$, under the assumption that $H_X$ is not of torsion-type.

\begin{prop} \label{nottorsionprop}
Let $H_X$ be FIG or IG and not of torsion-type. Let $\{y_i\in X\;:\;i \in I\}$ have the properties that $X=\bigcup_{i \in I}y_iH$ and that there exist $\{t_i\in H_X\;:\;i \in I\}$ with $y_i\langle t_i\rangle$ infinite for every $i \in I$. Now, for any generating sets $\G_i$ of $G_{y_i}$ and an invariable generating set $S_H$ of $H$, we have that $S_H\cup\big(\bigcup_{i \in I}\G_i\cup\G_it_i\cup\{t_i\}\big)$ invariably generates $G\wr_XH$.
\end{prop}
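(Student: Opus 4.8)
The plan is to show that for an arbitrary choice of conjugates of the proposed set $S := S_H\cup\big(\bigcup_{i\in I}\G_i\cup\G_it_i\cup\{t_i\}\big)$, the resulting set $S'$ generates $G\wr_X H$, by verifying the hypotheses of Lemma \ref{GHgenerate}. Since $S_H$ invariably generates $H$, the conjugates of the elements of $S_H$ together with Lemma \ref{GHgenerate} will take care of the head and the whole base \emph{once} we know that each $G_{y_i}$ is contained in $\langle S'\rangle$; so the crux is to recover a generating set of each $G_{y_i}$ from the conjugates of $\G_i$, $\G_i t_i$, and $\{t_i\}$. Fix one $i\in I$, write $t=t_i$, $y=y_i$, and note that $C:=y\langle t\rangle$ is infinite by hypothesis.

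\textbf{Recovering $G_y$ from conjugates of $g^{(y)}t$ for $g\in\G_i$.} By Lemma \ref{firstcalcs}, a conjugate of $t$ has the form $w_t h_t$ with $h_t$ an $H$-conjugate of $t$ (in particular of infinite order), and a conjugate of $g^{(y)}t$ has the form $w\,h_t$ for some $w\in\bigoplus_{x\in X}G_x$ (the head part is forced to be the same $H$-conjugate $h_t$ of $t$, since the head part of a conjugate depends only on the conjugating element's image in $H$ and on $t$). Dividing, $(w_t h_t)^{-1}(w h_t) = h_t^{-1}w_t^{-1}w\,h_t$ lies in $\langle S'\rangle$ and is a base element; call it $b_g$. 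The point is that $b_g$ is obtained from $g^{(y)}$ by first conjugating inside $G\wr_X H$ and then multiplying by the fixed base element $h_t^{-1}w_t^{-1}h_t$ that does \emph{not} depend on $g$. Now I want to use powers to isolate the $G_y$-coordinate. Because $h_t$ has infinite order, conjugation by $h_t$ moves $y$ along the infinite orbit $C$; by a computation in the spirit of Lemmas \ref{cosetreps} and \ref{conjworks} (only now using that the orbit is infinite rather than finite, so there is \emph{no} ``wrap-around'' term), for a suitable integer $N$ depending on the finite supports involved, the product $b_g\cdot (h_t^{N} b_g h_t^{-N})\cdots$ of sufficiently many $h_t$-translated copies telescopes so that the ``correction'' base element cancels against itself on disjoint coordinates, leaving a base element whose $G_y$-coordinate is a $G$-conjugate of $g$ and whose other coordinates lie on the (disjoint) translates $yh_t^{kN}$. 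Projecting — which is legitimate since all these coordinates are in distinct $G_x$'s and we already have all of $\bigoplus_x G_x$ available once we know one $y_i$ is covered, but to avoid circularity I instead argue directly — one extracts from $\langle S'\rangle$ an element $f_g^{(y)}\in G_y$ with $f_g$ a $G$-conjugate of $g$, for each $g\in\G_i$.

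\textbf{From conjugates of $\G_i$ to all of $G_y$.} We now also have the conjugates of the elements of $\G_i$ themselves; by Lemma \ref{firstcalcs} each such conjugate is $f^{(x)}\in G_x$ with $f$ a $G$-conjugate of the corresponding generator and $x\in y H$. Using an element $wk\in\langle S'\rangle$ with $k\in H$ sending $x$ back to $y$ (available because the $h_s$ for $s\in S_H$ generate $H$), conjugation carries $f^{(x)}$ to $(k^{-1}f^{\,\prime}k)^{(y)}\in G_y$ where $f'$ is a further $G_y$-conjugate of $f$. Combining these with the elements $f_g^{(y)}$ from the previous step: since $\G_i$ generates $G_{y}$ and invariable-generation is not being assumed for $G$, I cannot conclude from the $f_g^{(y)}$ alone; instead the role of including \emph{both} $\G_i$ and $\G_it_i$ in $S$ is that we may conjugate $g^{(y)}t$ and $g^{(y)}$ \emph{by the same element}, so that dividing two of \emph{their} conjugates yields $f^{(y)}$ with $f$ an \emph{arbitrary} (freely chosen, via the conjugator) $G$-conjugate of $g$ — and conjugating $g^{(y)}$ by a free element of $G_y$ gives every $G_y$-conjugate of $g$. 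Hence $\langle S'\rangle\supseteq\langle\, b^{-1}gb : g\in\G_i,\ b\in G_{y}\,\rangle = G_{y}$ because $\G_i$ is an ordinary generating set of $G_{y}$ and this is the normal closure of $\G_i$ in $G_y$, which is all of $G_y$.

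\textbf{Conclusion.} Having shown $G_{y_i}\subseteq\langle S'\rangle$ for every $i\in I$, and that the conjugates of $S_H$ in $S'$ form a set of the type fed into Lemma \ref{GHgenerate}, that lemma (applied with the $\{y_i\}$ and $X=\bigcup_i y_iH$) gives $\langle S'\rangle = G\wr_X H$. As $S'$ was an arbitrary conjugate-choice of $S$, this proves $S$ invariably generates $G\wr_X H$.

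\textbf{Main obstacle.} The delicate point is the telescoping/power argument that extracts a clean $G_y$-coordinate from a conjugate of $g^{(y)}t$: one must track the (finite) supports of the base parts $w$, $w_t$ of the conjugates, choose the exponent $N$ larger than the diameter of these supports within the infinite orbit $C$ (and handle the finitely many coordinates of $w$ lying \emph{outside} $C$, which are fixed by $h_t$-conjugation and so must be cancelled by a single extra division rather than by telescoping), and argue that the extraction of the $y$-coordinate is genuinely inside $\langle S'\rangle$ without first assuming the base is available. This is exactly where not-of-torsion-type is used — it is what guarantees an infinite orbit and hence the room to make all the translated supports disjoint. Everything else is bookkeeping on top of Lemmas \ref{firstcalcs} and \ref{GHgenerate}.
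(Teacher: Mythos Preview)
Your proposal contains two genuine errors that break the argument.

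\textbf{First, the head parts need not match.} You claim that the conjugate of $t$ and the conjugate of $g^{(y)}t$ both have head part the \emph{same} $h_t$, ``since the head part of a conjugate depends only on the conjugating element's image in $H$ and on $t$.'' That dependence is correct, but in invariable generation the conjugator for $t$ and the conjugator for $g^{(y)}t$ are chosen independently by the adversary; their images in $H$ have no reason to agree. So the two conjugates are $w_t h_1$ and $w\,h_2$ with $h_1,h_2$ \emph{different} $H$-conjugates of $t$, and their quotient is not a base element. The paper repairs this (Lemma~\ref{lemelementsalphag}) by first using $\langle S_H'\rangle$: for the unknown $k\in H$ appearing in the conjugator one finds $wk\in\langle S_H'\rangle$ and conjugates by it, bringing the head part back to \emph{exactly} $t$. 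Only after this normalisation do all the $\alpha_g$ (including $\alpha_e$, coming from $\{t\}'$) share the head $t$, so that $\alpha_e^{-m}\alpha_g^{m}$ is genuinely in the base.

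\textbf{Second, you are not free to reuse conjugators.} In the second paragraph you write ``we may conjugate $g^{(y)}t$ and $g^{(y)}$ by the same element'' and later ``conjugating $g^{(y)}$ by a free element of $G_y$ gives every $G_y$-conjugate of $g$,'' concluding that you obtain the normal closure of $\G_i$. Both steps are illegitimate: each element of $S$ receives its own, adversarially chosen, conjugator; you get exactly one conjugate of each generator, not a family ranging over all of $G_y$. Consequently your normal-closure conclusion is unfounded.

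The paper's route avoids both pitfalls. After the normalisation above, it computes $\beta(g,m,n)=\alpha_e^{n}(\alpha_e^{-m}\alpha_g^{m})\alpha_e^{-n}$ and chooses $m,n$ (Lemmas~\ref{alphamlem}--\ref{buildinggammas}) so that the $G_{yt^{c}}$-coordinate of $\beta(g,m,n)$ is \emph{exactly} $g$, where $c$ depends only on $\alpha_e$ and hence is the same for every $g\in\G$. Products of these base elements then realise every element of $G$ in the $yt^{c}$-coordinate, yielding a $\Gamma_{yt^{c}}$-set inside $\langle S'\rangle$. Finally Lemma~\ref{thestrategy} uses this $\Gamma$-set to undo the (single, adversarial) inner $G$-conjugation on each $\G_i'$-element and recover $G_{y_i}$; Lemma~\ref{GHgenerate} finishes. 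Your telescoping sketch aims at something similar, but as written it only produces a $G$-conjugate of $g$ at some coordinate (and uses $h_t$-conjugation, which is not available---only $w_t h_t$ is), whereas the decisive point is obtaining $g$ itself at a coordinate independent of $g$.
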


It is worth remarking that if $H_X$ is not of torsion-type, then there is a set $\{y_i\in X\;:\;i \in I\}$ which satisfies both of the conditions in the above proposition.

\begin{proof}[Proof of Theorem \ref{mainthmB} assuming Proposition \ref{nottorsionprop}] From Lemma \ref{notFIG}, if $H$ is IG, then $G\wr_X H$ is not FIG. Similarly if $G\wr_XH$ is not finitely generated, then it cannot be FIG. Proposition \ref{nottorsionprop} then provides an infinite invariable generating set for $G\wr_X H$. From Lemma \ref{finitelygen}, $G\wr_XH$ is finitely generated if and only if $G$ and $H$ are finitely generated and there exist $y_1, \ldots, y_n \in X$ such that $\bigcup_{i=1}^ny_iH_X=X$. With $H_X$ not of torsion-type, we can choose $y_1,\ldots y_n\in X$ such that there exist $t_1, \ldots, t_n \in H_X$ with $y_i\langle t_i\rangle$ infinite for each $i \in \{1, \ldots, n\}$. Hence in the case where $G\wr_X H$ is finitely generated and $H$ is FIG, we can apply Proposition \ref{nottorsionprop} to conclude that $S_{y_1, t_1}\cup \ldots \cup S_{y_n, t_n}$ is a finite set that invariably generates $G\wr_XH$.
\end{proof}

Thus all that is required is to prove the proposition. We now fix a choice of conjugators. Note that this notation does not introduce any restrictions to our results, and is merely for convenience.

\begin{not*}
Fix some choice of conjugators $a_{wk}\in G\wr_X H$ for every $wk \in G\wr_XH$. For any set $S\subseteq G\wr_XH$, let $S':=\{a_s^{-1}sa_s\;:\;s \in S\}$.
\end{not*}
\begin{defn} Let $z \in X$. Then a $\Gamma_z$-set is a subset of $\bigoplus_{x \in X}G_x$ such that for every $g \in G$ it contains an element $\gamma_g= \bigoplus_{x \in X}g_x^{(x)}$ where $g_x^{(x)} \in G_x$ for each $x \in X$ and $g_z^{(z)}=g$.
\end{defn}
We start by explaining the reason for wanting $\Gamma_z$-sets, which is that they will be a stepping stone towards a generating set.
\begin{lem}\label{thestrategy} Let $S_H$ invariably generate $H$.  Let $\{y_i\;:\;i \in I\}\subseteq X$ have the property that $X=\bigcup_{i \in I}y_iH$. For each $i \in I$, let $\G_i$ generate $G_{y_i}$, let $z_i \in y_iH$, and let $\Gamma_i$ be a $\Gamma_{z_i}$-set. Then $\langle S_H'\cup\big(\bigcup_{i \in I} \G_i'\cup\Gamma_i\big)\rangle=G\wr_X H$.
\end{lem}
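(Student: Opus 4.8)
The plan is to set
\[
N:=\Big\langle S_H'\cup\Big(\bigcup_{i\in I}\G_i'\cup\Gamma_i\Big)\Big\rangle
\]
and prove $N=G\wr_XH$ by exhibiting enough of the base inside $N$ and then quoting Lemma~\ref{GHgenerate}. Since $z_i\in y_iH$ forces $z_iH=y_iH$, we have $X=\bigcup_{i\in I}z_iH$, so it will suffice to show $G_{z_i}\subseteq N$ for every $i\in I$: Lemma~\ref{GHgenerate}, applied with the points $z_i$ and with $S_H$ (whose conjugate set $S_H'$ lies in $N$), then gives $\langle\bigcup_{i\in I}G_{z_i}\cup S_H'\rangle=G\wr_XH$. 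I would also first record that $N$ surjects onto $H$ under the quotient $G\wr_XH\to H$: by Lemma~\ref{firstcalcs} every element of $S_H'$ is a base element times an $H$-conjugate of the corresponding $s\in S_H$, and since $S_H$ invariably generates $H$ these $H$-conjugates generate $H$ for any choice of conjugators. Hence for each $k\in H$ there is a base element $w_k$ with $w_kk\in N$, and conjugation by $w_kk$ preserves $N$ while carrying $G_x$ onto $G_{xk}$ (again Lemma~\ref{firstcalcs}).

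For the crux, fix $i\in I$. By Lemma~\ref{firstcalcs} each element of $\G_i'$ is supported on a single coordinate $x\in y_iH$ and equals $\widehat{g}^{(x)}$ with $\widehat{g}$ a $G$-conjugate of the generator $g$ of $G_{y_i}$ it came from. Using the surjection onto $H$, conjugate $\widehat{g}^{(x)}$ by an element of $N$ lying over some $k\in H$ with $xk=z_i$; this produces, for each generator $g$ of $G$, a single-coordinate element at $z_i$ whose value is again a $G$-conjugate of $g$. Let $H_i\le G_{z_i}$ be the subgroup generated by these elements. Now invoke $\Gamma_i\subseteq N$: for each $g\in G$ it contains some $\gamma_g$ whose $z_i$-coordinate is $g$. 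Every element $f^{(z_i)}$ of $H_i$ is supported on the single coordinate $z_i$, so conjugating $\gamma_g$ by it changes only the $z_i$-coordinate, giving
\[
\gamma_g^{-1}\,\bigl(f^{(z_i)}\bigr)^{-1}\gamma_g\,f^{(z_i)}=[g,f]^{(z_i)}\in N\qquad\text{for all } g\in G,\ f^{(z_i)}\in H_i .
\]
Thus $N\cap G_{z_i}$ contains $H_i$ together with all the commutators $[g,f]^{(z_i)}$, i.e.\ it contains $H_i\cdot[G,H_i]$, which is exactly the normal closure of $H_i$ in $G_{z_i}$. Since the generators of $H_i$ are conjugate in $G_{z_i}$ to the members of a generating set of $G_{z_i}$ (the copy of $\G_i$), that normal closure is all of $G_{z_i}$, so $G_{z_i}\subseteq N$.

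With $G_{z_i}\subseteq N$ for every $i$, $S_H'\subseteq N$, and $X=\bigcup_{i\in I}z_iH$, Lemma~\ref{GHgenerate} then yields $N=G\wr_XH$, completing the proof.

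I expect the middle step to be the real obstacle, and in particular the way the two ingredients complement one another: $\G_i'$ only delivers \emph{one} conjugate of each generator at the coordinate $z_i$, which on its own need not generate $G_{z_i}$, and it is the $\Gamma_{z_i}$-set that supplies the commutators $[g,f]^{(z_i)}$ promoting $H_i$ to its normal closure. The care needed is mostly in the coordinate bookkeeping — that $\G_i'$ lands inside the orbit $y_iH=z_iH$ so that it can be slid onto $z_i$, and that conjugating $\gamma_g$ by a single-coordinate element touches only that coordinate — while the surjection onto $H$ and the final appeal to Lemma~\ref{GHgenerate} are routine once Lemma~\ref{firstcalcs} is available.
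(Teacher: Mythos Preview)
Your argument is correct, but it takes a detour that the paper avoids. Both proofs reduce to showing $G_{z_i}\subseteq N$ and then invoke Lemma~\ref{GHgenerate}; both first slide the single-coordinate element coming from $\G_i'$ onto the coordinate $z_i$, obtaining some $b^{-1}g^{(z_i)}b\in N$ with $b\in G_{z_i}$. The difference is in how the $\Gamma_{z_i}$-set is used. You form commutators $[\gamma_g,f^{(z_i)}]=[g,f]^{(z_i)}$ to produce $[G,H_i]$ inside $N$, and then argue that $H_i\cdot[G,H_i]$ is the normal closure of $H_i$, which equals $G_{z_i}$ because $H_i$ contains a conjugate of each generator. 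The paper instead exploits the $\Gamma_{z_i}$-set more directly: since $\Gamma_i$ contains an element $\gamma_b$ whose $z_i$-coordinate is \emph{exactly} the conjugator $b$ that appeared, one simply computes
\[
\gamma_b\bigl(b^{-1}g^{(z_i)}b\bigr)\gamma_b^{-1}=g^{(z_i)}\in N,
\]
recovering the original generator in one step with no normal-closure bookkeeping. Your route is valid (and the identity $H_i[G,H_i]=\langle\!\langle H_i\rangle\!\rangle_G$ does hold, since $[G,H_i]\trianglelefteq G$), but it uses only the commutator consequence of having a $\Gamma_{z_i}$-set, whereas the paper uses the full strength of the definition---that $\gamma_b$ is available for the \emph{specific} $b$ you need---to undo the conjugation outright.
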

\begin{proof} Fix an $i \in I$ and let $g \in \G_i$. By Lemma \ref{firstcalcs}, any conjugate of $g$ has the form $a^{-1}g^{(x)}a$ where $x \in y_iH$ and $a \in G_x$. We can then conjugate $a^{-1}g^{(x)}a$ to $b^{-1}g^{(z_i)}b$, for some $b \in G_{z_i}$, using an element of $\langle S_H'\rangle$. Thus $b^{-1}g^{(z_i)}b\in \langle S_H'\cup \G_i'\rangle$. By assumption $\gamma_b \in \Gamma_i$, and so
\begin{equation*}
\gamma_bb^{-1}g^{(z_i)}b\gamma_b^{-1}=g^{(z_i)}\in \langle S_H'\cup \G_i'\cup \Gamma_i\rangle.
\end{equation*}
Now $G_{z_i}\le \langle S_H'\cup \G_i'\cup \Gamma_i\rangle$, since $g$ was arbitrary. Moreover, our argument applies to every $i \in I$, and so
\begin{equation*}
\langle S_H'\cup\big(\bigcup_{i \in I} \G_i'\cup\Gamma_i\big)\rangle\ge \langle \bigcup_{i \in I}G_{z_i} \cup S_H'\rangle
\end{equation*}
which equals $G\wr_X H$ by Lemma \ref{GHgenerate}.
\end{proof}
\begin{not*} Fix:
\begin{itemize}
\item[i)] a $y\in X$ and $t\in H_X$ such that $y\langle t\rangle$ is infinite;
\item[ii)] $S_H$ to be an invariable generating set of $H$; and
\item[iii)] $\G$ to be a generating set of $G_y$.
\end{itemize}
\end{not*}
Our remaining computations will show that $\langle S_H'\cup(\G t)'\cup\{t\}'\rangle$ contains a $\Gamma_z$-set for some $z\in yH_X$. The form of $\{t\}'$ and elements in $S_H'$ and $\G'$ are known from Lemma \ref{firstcalcs}.

\begin{lem}\label{lemelementsalphag} Let $g \in \G$. Then $\langle (\G t)'\cup \{t\}'\cup S_H' \rangle$ contains an element of the form (\ref{elementsalphag}), where $a_x^{(x)}\in G_x$ for each $x \in X$.
\begin{equation}\label{elementsalphag}
\Big(\bigoplus_{x \in X}a_x^{(x)}\Big)^{-1}g^{(y)}t\bigoplus_{x \in X}a_x^{(x)}
\end{equation}
\end{lem}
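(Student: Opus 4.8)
The plan is to produce the desired element by a direct computation with an explicit conjugate of a single generator $gt \in \G t$, and then to clean it up using conjugators drawn from $\langle S_H' \rangle$ and $\langle \{t\}' \rangle$. By Lemma \ref{firstcalcs}, every element of $(\G t)'$ has the form $(wk)^{-1} g^{(y)} t (wk) = w^{-1} g^{(y)} (ktk^{-1}) w \cdot (\text{something in the base}) \cdot k t k^{-1}$ — more precisely, it is a base element times an $H$-conjugate $t' = ktk^{-1}$ of $t$. Since $S_H$ invariably generates $H$, the subgroup $\langle S_H' \rangle$ surjects onto $H$ (by Lemma \ref{firstcalcs} again, each element of $S_H'$ is a base element times an $H$-conjugate of the corresponding element of $S_H$, and these $H$-conjugates generate $H$). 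Likewise $\langle \{t\}' \rangle$ contains an element whose $H$-part is $t$ up to $H$-conjugacy. The first step is therefore to observe that, after conjugating a chosen element of $(\G t)'$ by a suitable element of $\langle S_H' \rangle$, we may arrange that the $H$-part is exactly $t$ (not merely an $H$-conjugate of it), while the base part changes but stays in $\bigoplus_{x \in X} G_x$ and the $G_{z}$-component for the relevant index is still a conjugate of $g$.

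Concretely, I would argue as follows. Pick the element $s := a_{gt}^{-1} (gt) a_{gt} \in (\G t)'$; by Lemma \ref{firstcalcs} we can write $s = v^{(*)} h^{-1} g^{(y)} h \cdot t'$ where $t' = h t h^{-1}$ for some $h \in H$ and $v \in \bigoplus_{x} G_x$ — the key point being that $s$ decomposes as (base element) times (the $H$-conjugate $t'$ of $t$), and the "$g$-part" sits in $G_{yh^{-1}}$ as a $G$-conjugate of $g$. Now choose $\kappa \in \langle S_H' \rangle$ projecting to $h^{-1} \in H$ (possible since $\langle S_H' \rangle$ surjects onto $H$); then $\kappa$ has the form $u^{(**)} h^{-1}$ for some base element $u$. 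Conjugating $s$ by $\kappa$ replaces $t'$ by $h^{-1} t' h = t$ and moves the $g$-part into $G_y$, at the cost of multiplying by base elements on the left and right. The outcome is an element of $\langle (\G t)' \cup S_H' \rangle$ of the form $w_1 g^{(y)} w_2 t$ with $w_1, w_2 \in \bigoplus_x G_x$; since $g^{(y)}$ and $t$ are honest elements and $t$ normalizes the base, one rewrites this as $\big(\bigoplus_x a_x^{(x)}\big)^{-1} g^{(y)} t \big(\bigoplus_x a_x^{(x)}\big)$ by absorbing $w_1, w_2, t$ appropriately — this is exactly form (\ref{elementsalphag}) once one checks that $g^{(y)}$ is preserved: conjugating $g^{(y)}$ by a base element only changes it by a $G_y$-conjugacy, which can be absorbed into $\G$ or handled by noting we have freedom over the $a_x$. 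I'd double-check this last absorption step carefully — it is the one place where "$g^{(y)}$" could secretly become "$a^{-1} g^{(y)} a$", and one needs either that this is acceptable (the statement only asks for \emph{an} element of that form with some $a_x$, and the $G_y$-conjugate of $g$ is still of the form $a_y^{-1} g a_y$ absorbed into the $y$-component) or to use the $\{t\}'$ generator to correct it.

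The main obstacle I expect is bookkeeping: tracking exactly which base components get modified when we conjugate by $\kappa \in \langle S_H' \rangle$ and by elements of $\langle \{t\}' \rangle$, and confirming that the $G_y$-component of the final element is genuinely $g$ (or a conjugate of $g$ that the statement tolerates) rather than some uncontrolled element. The role of $\{t\}'$ in the hypothesis suggests it is needed precisely to adjust the $H$-part or to provide an element whose base part can cancel unwanted contributions near the index $y$; so if the naive computation with $(\G t)'$ and $S_H'$ alone leaves a residual obstruction in $G_y$, the fix is to multiply by an appropriate power or conjugate of the chosen element of $\{t\}'$, whose $H$-part is an $H$-conjugate of $t$ and can be aligned with $t$ using $\langle S_H' \rangle$ as above. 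None of the individual conjugations is hard; the care is entirely in the indices.
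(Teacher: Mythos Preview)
Your strategy is the paper's: take the element of $(\G t)'$ corresponding to $g$ and conjugate it by something in $\langle S_H'\rangle$ to strip off the unwanted $H$-part of the conjugator. But you make the computation harder than it needs to be by first decomposing the conjugate of $g^{(y)}t$ as ``(base element) $\times$ ($H$-conjugate $t'$ of $t$)'', and this is exactly what generates your worry about the ``absorption step'' and whether $g^{(y)}$ survives as $g^{(y)}$ rather than a $G_y$-conjugate of it. From a bare expression $w_1 g^{(y)} w_2 t$ with unrelated $w_1,w_2$ in the base you genuinely \emph{cannot} conclude the element has the form~(\ref{elementsalphag}), so your hand-wave there is not innocent.

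The paper avoids this entirely by \emph{not} decomposing. Writing the conjugator as $ak$ with $a$ in the base and $k\in H$, one has
\[
(ak)^{-1}g^{(y)}t(ak)=k^{-1}\bigl(a^{-1}g^{(y)}ta\bigr)k,
\]
and then, since $S_H$ invariably generates $H$, there is some $wk\in\langle S_H'\rangle$ with the \emph{same} $k$. Conjugating by $wk$ gives
\[
(wk)\,k^{-1}(a^{-1}g^{(y)}ta)k\,(wk)^{-1}=w(a^{-1}g^{(y)}ta)w^{-1}=(aw^{-1})^{-1}g^{(y)}t(aw^{-1}),
\]
which is already in the form~(\ref{elementsalphag}) with no bookkeeping about where $g$ lands. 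Note also that $\{t\}'$ plays no role in this lemma; your speculation that it is needed to ``correct'' a residual obstruction is a red herring created by the premature decomposition.
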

\begin{proof} By definition $g^{(y)}t \in \G t$. Any conjugate of $g^{(y)}t$ has the form
\begin{equation*}
(ak)^{-1}g^{(y)}t(ak)=k^{-1}(a^{-1}g^{(y)}ta)k
\end{equation*}
where $a \in \bigoplus_{x \in X}G_x$ and $k \in H$. But $k\in \langle S_H\rangle$ and so, from our assumption that $S_H$ is an invariable generating set of $H$ and Lemma \ref{firstcalcs}, $\langle S_H'\rangle$ contains an element of the form $wk$ for some $w \in \bigoplus_{x \in X}G_x$. Computing,
\begin{equation*}
(wk)k^{-1}(a^{-1}g^{(y)}ta)k(wk)^{-1}=wa^{-1}g^{(y)}taw^{-1}=(aw^{-1})^{-1}g^{(y)}t(aw^{-1}).\qedhere
\end{equation*}
%which is exactly of the form in (\ref{elementsalphag}).
\end{proof}
%\begin{defn}\label{alpha} For each $g \in G$, let $\alpha(g) \in \langle S'\rangle$ be a fixed choice of element of the form appearing in (\ref{elementsalphag}). \todo{think can cut this definition}
%\end{defn}

%\begin{defn}
%Let $\alpha(e)$ be as in Definition \ref{alpha}. Let $d$ be the smallest natural number such that $\alpha(e)=\sigma\bigoplus_{-d<i<d}A_i^{(h_t^i)}h_t\bigoplus_{-d<i<d}a_i^{(h_t^i)}$ where $\sigma \in \bigoplus_{h \in H\setminus\langle h_t\rangle}G_h$.\todo{this notation is horrible...can maybe cut?}
%\end{defn}
Our key observation is that, for any $g \in G$, taking powers of $\{g^{(y)}t\}'$ results in an element with some $G_x$ components equal to $g$, in a computable and controlled way. This will allow us to produce a $\Gamma_{z}$-set for some known $z \in yH_X$. Under our assumption that $H_X$ is not of torsion-type and that $t \in H_X$ is an element with an infinite orbit, we will reduce to the simpler case of $\bigoplus_{x \in X}G_x\rtimes\langle t\rangle$ which behaves like $G\wr \Z$. The following notation reflects our wish to consider this simplification.
\begin{not*}
For any $g\in G$ and $i \in \Z$, let $g^\boxed{i}$ denote the element $g$ in $G_{yt^i}$.
\end{not*}
In light of Lemma \ref{lemelementsalphag}, we introduce some notation.
\begin{not*} For each $g\in\G$, let $\alpha_g$ denote a choice of an element in $\langle S_H'\cup (\G t)'\cup \{t\}'\rangle$ of the form (\ref{elementsalphag}).
\end{not*}
To aid our calculations, we fix some notation for two elements in $\{\alpha_g\;:\;g\in\G\}$.
\begin{not*} Let $e$ denote the identity element of $G$ and fix an $f\in\G$. Then
\begin{equation*}
\alpha_e:=\bigoplus_{i<|c|}A_i^\boxed{i}t\bigoplus_{i<|c|}a_i^\boxed{i}\sigma_e\text{ and } \alpha_f:=\bigoplus_{i<|d|}B_i^\boxed{i}f^\boxed{0}t\bigoplus_{i<|d|}b_i^\boxed{i}\sigma_f
\end{equation*}
where $c, d \in \N$ and for every $i \in \Z$ we have
\begin{itemize}
\item $a_i^\boxed{i}, b_i^\boxed{i}\in G_{yt^i}$, and $f^\boxed{0}\in G_y$
\item $A_i^\boxed{i}:=\Big(a_i^\boxed{i}\Big)^{-1}$ and $B_i^\boxed{i}:=\Big(b_i^\boxed{i}\Big)^{-1}$
\item $\sigma_e, \sigma_f \in \bigoplus_{x \in X\setminus y\langle t\rangle}G_x$.
\end{itemize}
\end{not*}

\begin{lem} \label{alphamlem} Let $m \in \N$. Then the element $\alpha_e^{-m}\alpha_f^m$ is of the form
\begin{equation}\label{alpham}
\bigoplus_{i<|c|}A_i^\boxed{i}\bigoplus_{i<|c|}a_{i}^\boxed{i+m}\bigoplus_{i<|d|}B_{i}^\boxed{i+m}f^\boxed{1}f^\boxed{2}\ldots f^\boxed{m}\bigoplus_{i<|d|}b_i^\boxed{i}\sigma
\end{equation}
where $\sigma \in \bigoplus_{x \in X\setminus y\langle t\rangle}G_x$.
\end{lem}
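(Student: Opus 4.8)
The plan is to use the hypothesis that $y\langle t\rangle$ is infinite to collapse the computation to an ordinary wreath product over $\Z$, where $\alpha_e^{-m}\alpha_f^m$ unwinds as a pure shift calculation. Set $N:=\bigoplus_{x\in X\setminus y\langle t\rangle}G_x$ and $M:=\big(\bigoplus_{x\in X}G_x\big)\rtimes\langle t\rangle$. Since $y\langle t\rangle$ is $\langle t\rangle$-invariant so is its complement, and base components with disjoint supports commute, so $N\trianglelefteq M$; and since $y\langle t\rangle$ is infinite the $\langle t\rangle$-stabiliser of $y$ is trivial, so $\langle t\rangle$ acts freely, hence regularly, on $y\langle t\rangle$ and $M/N\cong G\wr\Z$, with the copy of $G$ indexed by $i\in\Z$ the image of $G_{yt^i}$ (this is the ``$\bigoplus_{x\in X}G_x\rtimes\langle t\rangle$ behaves like $G\wr\Z$'' remark preceding the lemma). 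Both $\alpha_e$ and $\alpha_f$ lie in $M$, hence so does $\alpha_e^{-m}\alpha_f^m$, and the factors $\sigma_e,\sigma_f$ in their normal forms lie in $N$. As (\ref{alpham}) only asserts equality up to a right-hand factor $\sigma\in N$, it is enough to establish the identity in $M/N$: passing to the quotient simply erases $\sigma_e,\sigma_f$ and every $N$-valued term that would otherwise have to be shuffled to the end of the word.

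Next I would compute in $M/N$. Writing $W:=\bigoplus_{i<|c|}a_i^\boxed{i}$ and $U:=\bigoplus_{i<|d|}b_i^\boxed{i}$, the normal forms give $\alpha_e\equiv W^{-1}tW$ and $\alpha_f\equiv U^{-1}f^\boxed{0}tU$ modulo $N$, so $\alpha_e^{-m}\equiv W^{-1}t^{-m}W$ and $\alpha_f^m\equiv U^{-1}(f^\boxed{0}t)^mU$. Lemma \ref{firstcalcs} gives $t^{-1}g^\boxed{i}t=g^\boxed{i+1}$ (equivalently $tg^\boxed{i}t^{-1}=g^\boxed{i-1}$), and a short induction on $m$ then yields $(f^\boxed{0}t)^m=f^\boxed{0}f^\boxed{-1}\cdots f^\boxed{-(m-1)}\,t^m$. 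Hence modulo $N$
\begin{equation*}
\alpha_e^{-m}\alpha_f^m= W^{-1}\Big(t^{-m}\big(W\,U^{-1}f^\boxed{0}f^\boxed{-1}\cdots f^\boxed{-(m-1)}\big)\,t^m\Big)\,U,
\end{equation*}
and conjugation of the bracketed base element by $t^m$ shifts every index up by $m$: $t^{-m}Wt^m=\bigoplus_{i<|c|}a_i^\boxed{i+m}$, $t^{-m}U^{-1}t^m=\bigoplus_{i<|d|}B_i^\boxed{i+m}$, and $t^{-m}\big(f^\boxed{0}\cdots f^\boxed{-(m-1)}\big)t^m=f^\boxed{m}f^\boxed{m-1}\cdots f^\boxed{1}$. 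These last $m$ factors sit in distinct copies of $G$ (the points $yt^1,\ldots,yt^m$ are distinct, as $y\langle t\rangle\cong\Z$ as a $\langle t\rangle$-set), so they commute and may be rewritten as $f^\boxed{1}f^\boxed{2}\cdots f^\boxed{m}$. Reinstating $W^{-1}=\bigoplus_{i<|c|}A_i^\boxed{i}$ on the left and $U=\bigoplus_{i<|d|}b_i^\boxed{i}$ on the right produces precisely the word (\ref{alpham}) in $M/N$; lifting back, $\alpha_e^{-m}\alpha_f^m$ equals that word times a suitable $\sigma\in N=\bigoplus_{x\in X\setminus y\langle t\rangle}G_x$.

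I do not expect any step to be genuinely hard; this is exactly the ``conjugacy behaves more like multiplication'' phenomenon advertised in the introduction, and once we are inside $G\wr\Z$ it is a bare shift computation. The care is all in the reduction: checking $N\trianglelefteq M$, that $\langle t\rangle$ acts freely on the infinite orbit $y\langle t\rangle$, and recognising that the factor ``$\sigma\in\bigoplus_{x\in X\setminus y\langle t\rangle}G_x$'' in (\ref{alpham}) is exactly the ambiguity of working modulo $N$ — all of which rest only on $t$ preserving both $y\langle t\rangle$ and its complement and on disjoint-support components commuting. The remaining ingredients, the induction for $(f^\boxed{0}t)^m$ and the index shift by $t^m$, are routine.
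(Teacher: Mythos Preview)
Your argument is correct and is essentially the same computation as the paper's: the paper also reduces to $W^{-1}t^{-m}WU^{-1}(f^\boxed{0}t)^mU\sigma$ and then applies the index shift by $t^m$, noting at the outset that $t$ preserves $\bigoplus_{x\in X\setminus y\langle t\rangle}G_x$ so the $\sigma_e,\sigma_f$ contributions collect into a single $\sigma$. Your explicit passage to the quotient $M/N\cong G\wr\Z$ is a cleaner way of packaging that last observation, but the underlying calculation is identical.
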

\begin{proof} Since $t$ conjugates $\bigoplus_{x \in X\setminus y\langle t\rangle}G_x$ to itself, we have $(t^{-1}\sigma_e)^m(t\sigma_f)^m=\sigma$ for some $\sigma\in\bigoplus_{x \in X\setminus y\langle t\rangle}G_x$. Computing,
\begin{equation*}
\begin{split}
\alpha_e^{-m}\alpha_f^m&=\Big(\bigoplus_{i<|c|}A_i^\boxed{i}t\bigoplus_{i<|c|}a_i^\boxed{i}\sigma_e\Big)^{-m}\Big(\bigoplus_{i<|d|}B_i^\boxed{i}f^\boxed{0}t\bigoplus_{i<|d|}b_i^\boxed{i}\sigma_f\Big)^m\\
&=\Big(\bigoplus_{i<|c|}A_i^\boxed{i}t^{-1}\bigoplus_{i<|c|}a_i^\boxed{i}\sigma_e\Big)^m\Big(\bigoplus_{i<|d|}B_i^\boxed{i}f^\boxed{0}t\bigoplus_{i<|d|}b_i^\boxed{i}\sigma_f\Big)^m\\
&=\bigoplus_{i<|c|}A_i^\boxed{i}t^{-m}\bigoplus_{i<|c|}a_i^\boxed{i}\bigoplus_{i<|d|}B_i^\boxed{i}(f^\boxed{0}t)^m\bigoplus_{i<|d|}b_i^\boxed{i}\sigma%\\
%&=\bigoplus_{i<|c|}A_i^\boxed{i}t^{-m}\bigoplus_{i<|c|}a_i^\boxed{i}\bigoplus_{i<|d|}B_i^\boxed{i}t^mg^\boxed{1}g^\boxed{2}\ldots g^\boxed{m}\bigoplus_{i<|d|}b_i^\boxed{i}\sigma\\
%&=\bigoplus_{i<|c|}A_i^\boxed{i}\bigoplus_{i<|c|}a_{i}^\boxed{i+m}\bigoplus_{i<|d|}B_{i}^\boxed{i+m}g^\boxed{1}g^\boxed{2}\ldots g^\boxed{m}\bigoplus_{i<|d|}b_i^\boxed{i}\sigma.\qedhere
\end{split}
\end{equation*}
which simplifies to the expression in (\ref{alpham}).
\end{proof}

\begin{not*} For any $m, n \in \N$ and $g\in\G$, let $\beta(g, m, n):=\alpha_e^n(\alpha_e^{-m}\alpha_g^m)\alpha_e^{-n}$.
\end{not*}
\begin{lem} \label{formofbeta} Let $m, n \in \N$. Then $\beta(f, m, n)$ is of the form
\begin{equation*}
\bigoplus_{i<|c|}A_i^\boxed{i}\bigoplus_{i<|c|}a_{i}^\boxed{i+m-n}\bigoplus_{i<|d|}B_{i}^\boxed{i+m-n}\bigoplus_{i=1}^{m}f^\boxed{i-n}\bigoplus_{i<|d|}b_{i}^\boxed{i-n}\bigoplus_{i<|c|}A_{i}^\boxed{i-n}\bigoplus_{i<|c|}a_{i}^\boxed{i}\sigma'
\end{equation*}
where $\sigma' \in\bigoplus_{x \in X\setminus y\langle t\rangle}G_x$.
\end{lem}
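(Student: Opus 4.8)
The statement is a purely computational consequence of the definition $\beta(f,m,n):=\alpha_e^n(\alpha_e^{-m}\alpha_f^m)\alpha_e^{-n}$ together with Lemma \ref{alphamlem}, which already gives the form of the middle factor $\alpha_e^{-m}\alpha_f^m$. So the plan is to substitute the expression \eqref{alpham} into the definition of $\beta(f,m,n)$ and track how conjugation by $\alpha_e^n$ shifts indices. The key bookkeeping fact is the one already used in the proof of Lemma \ref{alphamlem}: $\alpha_e$ has the shape $\bigl(\bigoplus_{i<|c|}A_i^{\boxed{i}}\bigr)\,t\,\bigl(\bigoplus_{i<|c|}a_i^{\boxed{i}}\bigr)\sigma_e$, so that $\alpha_e^n$ collapses (after the internal $a$'s and $A$'s telescope against one another across consecutive factors) to $\bigl(\bigoplus_{i<|c|}A_i^{\boxed{i}}\bigr)\,t^n\,\bigl(\bigoplus_{i<|c|}a_i^{\boxed{i}}\bigr)\,\sigma_e^n$, and similarly $\alpha_e^{-n}$ collapses to $\bigl(\bigoplus_{i<|c|}A_i^{\boxed{i}}\bigr)\,t^{-n}\,\bigl(\bigoplus_{i<|c|}a_i^{\boxed{i}}\bigr)\,\sigma_e^{-n}$. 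Here $t$ conjugates the off-orbit part $\bigoplus_{x\in X\setminus y\langle t\rangle}G_x$ to itself, so all the $\sigma$-type contributions can be gathered harmlessly into a single $\sigma'\in\bigoplus_{x\in X\setminus y\langle t\rangle}G_x$ at the end and never interfere with the $y\langle t\rangle$-indexed components.

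First I would write $\alpha_e^{-m}\alpha_f^m = W\,\sigma$ where, by Lemma \ref{alphamlem}, $W=\bigoplus_{i<|c|}A_i^{\boxed{i}}\bigoplus_{i<|c|}a_i^{\boxed{i+m}}\bigoplus_{i<|d|}B_i^{\boxed{i+m}}f^{\boxed{1}}f^{\boxed{2}}\cdots f^{\boxed{m}}\bigoplus_{i<|d|}b_i^{\boxed{i}}$ is supported (up to the $\sigma$) on $y\langle t\rangle$, and $\sigma\in\bigoplus_{x\in X\setminus y\langle t\rangle}G_x$. Then $\beta(f,m,n)=\alpha_e^n\,W\,\sigma\,\alpha_e^{-n}$. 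Conjugating by $\alpha_e^n$ with the collapsed form above, the head $t^n$ acts on each $g^{\boxed{i}}\in G_{yt^i}$ by sending it to $g^{\boxed{i-n}}$ (the action multiplies indices on the right, so $t^{-n}$ on the left / $t^n$ on the right shifts by $-n$); this converts every superscript $\boxed{i}$ appearing in $W$ to $\boxed{i-n}$, and the internal $a$-, $A$-factors of $\alpha_e^{\pm n}$ that flank $W$ become the outermost $\bigoplus_{i<|c|}A_i^{\boxed{i-n}}$ on the left (from $\alpha_e^n$, after index shift) and $\bigoplus_{i<|c|}A_i^{\boxed{i-n}}\bigoplus_{i<|c|}a_i^{\boxed{i}}$ on the right (the $a_i^{\boxed i}$ coming from the $\alpha_e^{-n}$ factor whose head $t^{-n}$ has already moved past). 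Collecting: the left end is $\bigoplus_{i<|c|}A_i^{\boxed i}$ (from the leftmost $A$'s of $\alpha_e^n$, unshifted since they sit to the left of $t^n$), then the shifted body of $W$, namely $\bigoplus_{i<|c|}a_i^{\boxed{i+m-n}}\bigoplus_{i<|d|}B_i^{\boxed{i+m-n}}\prod_{i=1}^m f^{\boxed{i-n}}\bigoplus_{i<|d|}b_i^{\boxed{i-n}}$, then the two tail blocks $\bigoplus_{i<|c|}A_i^{\boxed{i-n}}\bigoplus_{i<|c|}a_i^{\boxed i}$, and finally a single $\sigma'$ absorbing all off-orbit contributions. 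This is exactly the claimed expression.

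The one point that needs care — and I expect it to be the main (though still routine) obstacle — is justifying the telescoping inside $\alpha_e^{\pm n}$: a priori $\alpha_e^n$ is a product of $n$ copies of $\bigl(\bigoplus A_i^{\boxed i}\bigr)t\bigl(\bigoplus a_i^{\boxed i}\bigr)\sigma_e$, and one must move each $t$ to the left past the preceding $\bigoplus a_i^{\boxed i}$, picking up index shifts, and check that consecutive $\bigoplus a_i^{\boxed i}$ and $\bigoplus A_i^{\boxed i}$ cancel. This is the same computation that is implicit in Lemma \ref{alphamlem}, so I would phrase it as "arguing exactly as in the proof of Lemma \ref{alphamlem}" rather than redoing it. Everything else is substitution and relabelling of indices, and because the group elements $a_i,A_i,b_i,B_i,f$ and the off-orbit junk $\sigma'$ are all left unspecified, no actual cancellation of the named symbols is required — one only needs to verify which $G_{yt^j}$ each factor lands in, which is exactly the index arithmetic $i\mapsto i+m-n$, $i\mapsto i-n$ recorded above.
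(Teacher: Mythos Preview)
Your proposal is correct and is essentially the same approach as the paper's own proof, which simply says ``Routine computations, together with $\sigma':=(t\sigma_e)^n\sigma(t^{-1}\sigma_e)^n$, yield the result.'' You have spelled out those routine computations in detail---collapsing $\alpha_e^{\pm n}$ via telescoping, using Lemma~\ref{alphamlem} for the middle factor, and tracking the index shift $\boxed{i}\mapsto\boxed{i-n}$ under conjugation by $t^n$---and your bookkeeping matches the claimed expression. One cosmetic point: writing the off-orbit part of $\alpha_e^n$ as ``$\sigma_e^n$'' is not literally correct since $t$ and $\sigma_e$ need not commute, but you already flag that all off-orbit contributions are absorbed into a single $\sigma'\in\bigoplus_{x\in X\setminus y\langle t\rangle}G_x$, so this does not affect the argument.
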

\begin{proof}
Routine computations, together with $\sigma':=(t\sigma_e)^n\sigma(t^{-1}\sigma_e)^n$, yield the result.
\end{proof}
We now carefully choose $m, n \in \N$ so that $\beta(f, m, n)$ is an element of a $\Gamma_z$-set for some $z \in yH_X$. The importance of our particular choice is that the $z \in X$ will be independent of our choice of $f\in\G$, implying that there will be a $\Gamma_z$-set in $\langle \beta(g, m, n)\;:\;g \in \G$ and $m, n \in \N\rangle$.
\begin{lem}\label{buildinggammas} Given a $g\in\G$, there exist $m_g, n_g\in \N$ such that $\beta(g, m_g+n_g, n_g)$ is of the form $\bigoplus_{x\in X}g_x^{(x)}$ with $g_x^{(x)}\in G_x$ for each $x\in X$ and $g_{yt^c}^{(yt^c)}=g$. In particular, $\langle S_H'\cup (\G t)'\cup\{t\}'\rangle$ contains $\Gamma$, a $\Gamma_{yt^c}$-set.
\end{lem}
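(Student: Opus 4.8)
The plan is to exploit the explicit formula for $\beta(f,m,n)$ given in Lemma \ref{formofbeta} and choose $m,n$ so that all the "error terms" (the blocks $\bigoplus A_i$, $\bigoplus a_i$, $\bigoplus B_i$, $\bigoplus b_i$ etc.) live in coordinates $yt^j$ with $j$ far away from a fixed target index, while a single factor $f^\boxed{c}$ survives at the target coordinate $yt^c$. Looking at the formula, the blocks are supported on index sets $[m-n,\,m-n+|c|)$, $[m-n,\,m-n+|d|)$, $[1-n,\,m-n]$ (the $f$-block), $[-n,\,-n+|d|)$, $[-n,\,-n+|c|)$, and $[0,|c|)$, plus $\sigma'$ which is supported entirely off $y\langle t\rangle$. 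So the idea is: pick the target to be $z:=yt^c$ (i.e. index $c$), and choose $n$ and $m$ so that $c$ lies inside the $f$-block $[1-n,\,m-n]$ but outside every other block, and so that within the $f$-block the coordinate $yt^c$ is hit exactly once.

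First I would write $c := |c|$ and $d := |d|$ as in the notation (both in $\N$), and set the target index to be $c$ itself (the "$\boxed{c}$" appearing in the statement of Lemma \ref{buildinggammas}). I then want $m_g + n_g$ and $n_g$ chosen with $m := m_g+n_g$, $n := n_g$. From Lemma \ref{formofbeta} applied with $m \rightsquigarrow m_g+n_g$ and $n\rightsquigarrow n_g$, the four "junk" blocks of $a$'s and $b$'s sit at indices in $[m_g,\,m_g+c)$, $[m_g,\,m_g+d)$, $[-n_g,\,-n_g+d)$, $[-n_g,\,-n_g+c)$ and $[0,c)$, while the $f$-block occupies indices $[1-n_g,\, m_g]$, i.e. $f^\boxed{1-n_g}f^\boxed{2-n_g}\cdots f^\boxed{m_g}$ — wait, re-reading: $\bigoplus_{i=1}^{m}f^\boxed{i-n}$ with $m = m_g+n_g$, $n=n_g$ gives $f$ at indices $1-n_g, 2-n_g, \dots, m_g$. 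So I would choose $n_g$ large enough that $[-n_g, -n_g+c)$ and $[-n_g,-n_g+d)$ lie strictly below $c$ and disjoint from $[0,c)$ — e.g. $n_g > c + d$ suffices to push those blocks below $0$; and choose $m_g$ large enough that $[m_g, m_g+c)$ and $[m_g,m_g+d)$ lie strictly above $c$ — e.g. $m_g > c$. Then among all the displayed blocks, only the $f$-block $[1-n_g, m_g]$ and the trailing block $\bigoplus_{i<c}a_i^\boxed{i}$ (at indices $[0,c)$) can contain the index $c$; but $[0,c)$ excludes $c$, so only the $f$-block contains index $c$, and it contains it exactly once (the indices $1-n_g,\dots,m_g$ are distinct). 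Hence the $yt^c$-coordinate of $\beta(f, m_g+n_g, n_g)$ is exactly $f$, and all coordinates off $y\langle t\rangle$ come from $\sigma'$, which is irrelevant to the $\Gamma$-condition. This shows $\beta(g, m_g+n_g, n_g)$ is of the required form $\bigoplus_x g_x^{(x)}$ with $g_{yt^c}^{(yt^c)} = g$.

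For the "in particular" clause: the crucial point stressed in the paragraph before the lemma is that $m_g, n_g$ (hence $c$) can be chosen independently of which $g\in\G$ we use — indeed $c$ and $d$ depend only on the fixed $\alpha_e$ and $\alpha_f$, but re-reading, $d$ is really $|d_g|$ depending on $\alpha_g$, so I should choose a uniform $n_g := n$ and $m_g := m$ working for all $g$ simultaneously, which is fine since $\G$ — careful: $\G$ may be infinite. If $\G$ is infinite the $d_g$ are unbounded, so instead I would fix, for each individual $g$, its own $m_g, n_g$ with $m_g - n_g$ chosen so that the target index is always the \emph{same} value; the formula shows the surviving $f$-block top index is $m_g$ and its bottom is $1-n_g$, and the key invariant I can keep constant is $c$ (from $\alpha_e$) which is fixed once and for all. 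Setting the target to $yt^c$: for each $g$ pick $n_g > c + d_g$ and $m_g > c$; then by the argument above $\beta(g, m_g+n_g, n_g)$ has $yt^c$-coordinate equal to $g$ and lies in $\langle S_H'\cup(\G t)'\cup\{t\}'\rangle$ by Lemma \ref{lemelementsalphag} and the definitions of $\alpha_g$ and $\beta$. Therefore the set $\{\beta(g, m_g+n_g, n_g) : g\in\G\}$, together with (if needed) the identity element, is a $\Gamma_{yt^c}$-set contained in $\langle S_H'\cup(\G t)'\cup\{t\}'\rangle$, which is what is claimed with $z := yt^c$.

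\textbf{Main obstacle.} The delicate bookkeeping is making sure the index intervals of the junk blocks are genuinely disjoint from the target index and that the $f$-block meets it exactly once; this requires tracking the supports in Lemma \ref{formofbeta} carefully and picking $n_g, m_g$ with the right inequalities relative to $c$ and $d_g$. A secondary subtlety — if $\G$ is infinite — is that $d_g$ varies with $g$, so the choice of $n_g$ must be allowed to depend on $g$ while the target coordinate $yt^c$ stays fixed; one must check the argument only ever used that $c$ is fixed (which it is, coming from $\alpha_e$) and never needed a uniform bound on $d_g$. Everything else is the routine verification already packaged into Lemmas \ref{alphamlem} and \ref{formofbeta}.
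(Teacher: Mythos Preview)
Your approach is essentially identical to the paper's: use the explicit form of $\beta(f,m,n)$ from Lemma~\ref{formofbeta}, target the coordinate $yt^{c}$, and choose $m_g,n_g$ large enough that every junk block misses index $c$ while the $f$-block hits it exactly once, noting that $c$ comes from $\alpha_e$ alone and so is independent of $g$. Two small corrections are needed, however.

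First, the paper's summation $\bigoplus_{i<|c|}$ is shorthand for $|i|<c$ (the conjugating element in $\alpha_e$ has support on both sides of $0$ along $y\langle t\rangle$), so the block $\bigoplus a_i^{\boxed{i+m_g}}$ is supported on indices in $(m_g-c,\,m_g+c)$ rather than $[m_g,m_g+c)$; your bound $m_g>c$ is therefore too weak, and one needs $m_g>c+\max\{c,d_g\}$ as in the paper (this guarantees $-c+m_g>c$ and $-d_g+m_g>c$). Second, the set $\{\beta(g,m_g+n_g,n_g):g\in\G\}\cup\{e\}$ is \emph{not} itself a $\Gamma_{yt^{c}}$-set, since $\G$ is only a generating set for $G$ and a $\Gamma_z$-set must contain an element with $z$-component equal to \emph{every} $g\in G$. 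What is true is that each $\beta(g,m_g+n_g,n_g)$ lies in the base $\bigoplus_{x}G_x$ with $yt^{c}$-component $g$, so products of these elements remain in the base and their $yt^{c}$-components multiply; hence the \emph{subgroup} they generate surjects onto $\langle\G\rangle=G$ in the $yt^{c}$-coordinate, and that subgroup is the desired $\Gamma_{yt^{c}}$-set. With these two fixes your argument coincides with the paper's.
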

\begin{proof} We continue with our notation involving $f\in \G$. Lemma \ref{formofbeta} states, for any $m, n \in \N$, that $\beta(f, m,n)$ is of the form
\begin{equation*}
\bigoplus_{i<|c|}A_i^\boxed{i}\bigoplus_{i<|c|}a_{i}^\boxed{i+m-n}\bigoplus_{i<|d|}B_{i}^\boxed{i+m-n}\bigoplus_{i=1}^{m}f^\boxed{i-n}\bigoplus_{i<|d|}b_{i}^\boxed{i-n}\bigoplus_{i<|c|}A_{i}^\boxed{i-n}\bigoplus_{i<|c|}a_{i}^\boxed{i}\sigma'
\end{equation*}
where $\sigma' \in\bigoplus_{x \in X\setminus y\langle t\rangle}G_x$. Let $m_f\in \N$ be chosen such that $m_f>c+\max\{c, d\}$. This means that $m_f\ge c+1$, $-c+m_f>c$, and $-d+m_f>c$, all of which impact on certain summands appearing in $\beta(f, m, n)$.

Next, let $n_f\in \N$ be chosen such that $d-n_f<c$. The element $\beta(g, m_f+n_f, n_f)$ now has, for some $\sigma' \in\bigoplus_{x \in X\setminus y\langle t\rangle}G_x$, the form
\begin{equation*}
\bigoplus_{i<|c|}A_i^\boxed{i}\bigoplus_{i<|c|}a_{i}^\boxed{i+m_f}\bigoplus_{i<|d|}B_{i}^\boxed{i+m_f}\bigoplus_{i=1}^{m_f+n_f}f^\boxed{i-n_f}\bigoplus_{i<|d|}b_{i}^\boxed{i-n_f}\bigoplus_{i<|c|}A_{i}^\boxed{i-n_f}\bigoplus_{i<|c|}a_{i}^\boxed{i}\sigma'.
\end{equation*}
By considering each summand, together with the conditions placed on $m_f, n_f \in \N$, we see that $\beta(f, m_f+n_f, n_f)$ is an element $\bigoplus_{x \in X}g_x^{(x)}$ with $g_x^{(x)}\in G_x$ for each $x \in X$ and $g_{yt^c}^{(yt^c)}=f$. Since the constant $c$ relates only to $\alpha_e$, our argument applies to any $g\in\G$.
\end{proof}
\begin{proof}[Proof of Proposition \ref{nottorsionprop}] Since our choice of $y$ and $t$ were arbitrary, we can apply Lemma \ref{buildinggammas} to each $i\in I$ to obtain that
$$\langle S_H'\cup\big(\bigcup_{i \in I}\G_i'\cup(\G_it_i)'\cup\{t_i\}'\big)\rangle\supseteq\langle S_H'\cup\big(\bigcup_{i \in I} \G_i'\cup\Gamma_i\big)\rangle$$
which equals $G\wr_XH$ by Lemma \ref{thestrategy}.
\end{proof}

\bibliographystyle{amsalpha}
\def\cprime{$'$}
\providecommand{\bysame}{\leavevmode\hbox to3em{\hrulefill}\thinspace}
\providecommand{\MR}{\relax\ifhmode\unskip\space\fi MR }
% \MRhref is called by the amsart/book/proc definition of \MR.
\providecommand{\MRhref}[2]{%
 \href{http://www.ams.org/mathscinet-getitem?mr=#1}{#2}
}
\providecommand{\href}[2]{#2}
\end{document}